\documentclass[pdflatex,sn-mathphys-num]{sn-jnl}% Math and Physical Sciences Numbered Reference Style 
%%\documentclass[pdflatex,sn-mathphys-ay]{sn-jnl}% Math and Physical Sciences Author Year Reference Style
%%\documentclass[pdflatex,sn-aps]{sn-jnl}% American Physical Society (APS) Reference Style
%%\documentclass[pdflatex,sn-vancouver,Numbered]{sn-jnl}% Vancouver Reference Style
%%\documentclass[pdflatex,sn-apa]{sn-jnl}% APA Reference Style 
%%\documentclass[pdflatex,sn-chicago]{sn-jnl}% Chicago-based Humanities Reference Style

%%%% Standard Packages
%%<additional latex packages if required can be included here>

\voffset=-0.0cm \hoffset=-1.5cm \textwidth=16cm \textheight=23.0cm

\usepackage{graphicx}%
\usepackage{multirow}%
\usepackage{amsmath,amssymb,amsfonts}%
\usepackage{amsthm}%
\usepackage{mathrsfs}%
\usepackage[title]{appendix}%
\usepackage{xcolor}%
\usepackage{textcomp}%
\usepackage{manyfoot}%
\usepackage{booktabs}%
\usepackage{algorithm}%
\usepackage{algorithmic}%
\usepackage{listings}%
\usepackage{anyfontsize}
%%%%  of original research articles intended for submission to journals published 
%%%%  by Springer Nature. The guidance has been prepared in partnership with 
%%%%  production teams to conform to Springer Nature technical requirements. 
%%%%  Editorial and presentation requirements differ among journal portfolios and 
%%%%  research disciplines. You may find sections in this template are irrelevant 
%%%%  to your work and are empowered to omit any such section if allowed by the 
%%%%  journal you intend to submit to. The submission guidelines and policies 
%%%%  of the journal take precedence. A detailed User Manual is available in the 
%%%%  template package for technical guidance.
%%%%%=============================================================================%%%%

%% as per the requirement new theorem styles can be included as shown below
% \theoremstyle{thmstyleone}%
\newtheorem{theorem}{Theorem}%  meant for continuous numbers
%%\newtheorem{theorem}{Theorem}[section]% meant for sectionwise numbers
%% optional argument [theorem] produces theorem numbering sequence instead of independent numbers for Proposition
\newtheorem{proposition}[theorem]{Proposition}% 
\newtheorem{remark}{Remark}%
\newtheorem{lemma}{Lemma}
\newtheorem{assumption}{Assumption}

\newtheorem{corollary}{Corollary}
\newtheorem{definition}{Definition}%

\raggedbottom
%%\unnumbered% uncomment this for unnumbered level heads

% Personalized notes

\newcommand{\rev}{\textcolor{black}}

\begin{document}

\title[Probabilistic Iterative Hard Thresholding for Sparse Learning]{Probabilistic Iterative Hard Thresholding for Sparse Learning}

%%=============================================================%%
%% GivenName	-> \fnm{Joergen W.}
%% Particle	-> \spfx{van der} -> surname prefix
%% FamilyName	-> \sur{Ploeg}
%% Suffix	-> \sfx{IV}
%% \author*[1,2]{\fnm{Joergen W.} \spfx{van der} \sur{Ploeg} 
%%  \sfx{IV}}\email{iauthor@gmail.com}
%%=============================================================%%

\author[1]{\fnm{Matteo} \sur{Bergamaschi}}\email{bergamas@math.unipd.it}

\author[2]{\fnm{Andrea} \sur{Cristofari}}\email{andrea.cristofari@uniroma2.it}

\author[3]{\fnm{Vyacheslav} \sur{Kungurtsev}}\email{kunguvya@fel.cvut.cz}

\author*[1]{\fnm{Francesco} \sur{Rinaldi}}\email{rinaldi@math.unipd.it}

\affil[1]{\orgdiv{Department of Mathematics ``Tullio Levi-Civita''}, \orgname{University of Padua}}%, \orgaddress{\street{Street}, \city{City}, \postcode{100190}, \state{State}, \country{Country}}}

\affil[2]{\orgdiv{Department of Civil Engineering and Computer Science Engineering}, \orgname{University of Rome ``Tor Vergata''}}%, \orgaddress{\street{Street}, \city{City}, \postcode{100190}, \state{State}, \country{Country}}}

\affil[3]{\orgdiv{Department of Civil Engineering and Computer Science Engineering}, \orgname{Czech Technical University in Prague}}%, \orgaddress{\street{Street}, \city{City}, \postcode{100190}, \state{State}, \country{Country}}}

% \title{Probabilistic Iterative Hard Thresholding for Sparse Learning \thanks{Submitted to the editors \today.
% \funding{This work was funded by the European Union’s Horizon Europe research and innovation programme under grant agreement No. 101084642.}}}
% \author{Matteo Bergamaschi\thanks{Department of Mathematics ``Tullio Levi-Civita'', University of Padua (\email{bergamas@math.unipd.it}, \email{rinaldi@math.unipd.it})}, \and Andrea Cristofari\thanks{Department of Civil Engineering and Computer Science Engineering, University of Rome ``Tor Vergata'' (\email{andrea.cristofari@uniroma2.it})}, \and Vyacheslav Kungurtsev\thanks{Czech Technical University in Prague (\email{kunguvya@fel.cvut.cz})}, \and Francesco Rinaldi\footnotemark[2]}
% % \maketitle
% \begin{abstract}
    
\abstract{
For statistical modeling wherein the data regime is unfavorable in terms of dimensionality relative to the sample size, finding hidden sparsity in the relationship structure between variables can be critical in formulating an accurate statistical model. The so-called ``$\ell_0$ norm'', which counts the number of non-zero components in a vector, is a strong reliable mechanism of enforcing sparsity when incorporated into an optimization problem \rev{for minimizing the fit of a given model to a set of observations}. However, in big data settings wherein noisy estimates of the gradient must be evaluated out of computational necessity, the literature is scant on methods that reliably converge. In this paper, we present an approach towards solving expectation objective optimization problems with cardinality constraints. We prove convergence of the underlying stochastic process and demonstrate the performance on two Machine Learning problems.}

% \end{abstract}
% \begin{keywords}
% cardinality constraint, stochastic optimization
% \end{keywords}

\keywords{cardinality constraint, stochastic optimization}

% % REQUIRED
% \begin{MSCcodes}
% 68Q25, 68R10, 68U05
% \end{MSCcodes}

\maketitle

%%%%%%%%%%%%%%%%%%%%%%%%%%%%%%%%%%%%%%%%%%%%%%%%%%%%%%%%%%%%%%%%%%%%%%%%%%%%%%
\section{Introduction}

In this paper, we consider the \rev{optimization problem defined as the minimization of an expectation objective subject to a constraint on the cardinality, that is, the number of non-zeros components in the decision vector. Formally,}
\begin{equation}\label{eq:prob}
    \begin{array}{rl}
\min\limits_{x\in\mathbb{R}^n} & f(x):=\mathbb{E}[F(x,\xi)] \\
\text{s.t. } & \|x\|_0\le K,
    \end{array}
\end{equation}
where $f(\cdot)$ is $L(f)$-Lipschitz continuously differentiable\rev{, that is, $\|\nabla f(x) - \nabla f(y)\| \le L(f) \|x-y\|$} for all $x, y \in \mathbb{R}^n$, \rev{with $L(f) > 0$}. We say that $x\in C_K \rev{\subseteq \mathbb{R}^n}$ if $\|x\|_0\le K$ and thus a feasible $x$ corresponds to $x\in C_K$.

This optimization problem is particularly important in \rev{data science applications}. In particular, the expectation objective serves to quantify the minimization of some empirical loss function that enforces the fit of a statistical model to empirical data. Cardinality constraints enforce sparsity in the model, enabling the discovery of the most salient features as far as predicting the label. 

Cardinality constraints present a significant challenge to optimization solvers. The so-called (as it is not, formally)
zero ``norm'' is a discontinuous function that results in a highly non-convex and disconnected feasible set, as well as an unusual topology of stationary points and minimizers~\cite{lammel2022nondegenerate,lammel2023critical}. Algorithmic development 
has been, as similar to many such problems, a parallel endeavor from the mathematical optimization and the
machine learning communities. When dealing with a deterministic objective function, procedures attuned to the structure of the problem and seeking stationary points
of various strength are presented, for instance, in~\cite{beck2013sparsity}. Methods for deterministic optimization problems with sparse symmetric sets are proposed  in, e.g., \cite{beck2016minimization, hallak2024path},  while methods for deterministic optimization problems with 
both cardinality and nonlinear constraints are described in, e.g.,   \cite{Branda2018,bucher2018second,Burdakov2016,vcervinka2016constraint,lapucci2021convergent,lapucci2023unifying,Lu2013}. Simultaneously
works appearing in machine learning conferences, e.g.,~\cite{zhou2018efficient,zhou2019stochastic,murata2018sample,jain2014iterative}, exhibit weak theoretical convergence guarantees,
but appear to scale more adequately as far as numerical experience. Thus, an algorithm that enjoys both reliable performance together with strong theoretical guarantees, as sought for  the high dimensional high data volume model fitting problems in contemporary data science, is as of yet unavailable.

In this paper we attempt to reconcile these two and present an algorithm that is associated with reasonably
strong theoretical convergence guarantees, while at the same time able to solve large scale problems of interest
in statistics and machine learning. To this end, we present a procedure under the framework of \emph{Probabilistic
Models}, which can be understood as a sequential linear Sample Average Approximation (SAA) scheme for solving
problems with statistics in the objective function. First introduced in~\cite{bastin2006adaptive}, then rediscovered with extensive analysis in~\cite{bandeira2014convergence,chen2018stochastic}, this
approach can exhibit asymptotic (and even worst case complexity)
results to a local minimizer of the original problem, while still allowing the use of Newton-type second order iterations of subproblem solutions, and thus faster convergence as far as iteration count. The use of probabilistically accurate estimates within a certain bound in these methods permit a rather flexible approach to estimating the gradient, including techniques that introduce bias, while foregoing the necessity of a stepsize asymptotically diminishing to zero. However, asymptotic accurate convergence still requires increasing the batch size, so the tradeoffs in precision and certainty relative to computation become apparent, and adaptive for the user, in deciding at which point to stop the algorithm and return the current iterate as an estimate of the solution. 

As contemporary Machine Learning applications, we shall consider
Adversarial Attacks (see, e.g., \cite{carlini2017towards,croce2019sparse,modas2019sparsefool} and references therein for further details) and Probabilistic Graphical Model training (see, e.g., \cite{behdin2023sparse,negri2023conditional} and references therein for further details).
%and Dynamic Bayesian Network training (see, e.g., \cite{ng2020role,pamfil2020dynotears} and references therein for further details).
% wherein we learn the adjacency matrix of a graphical model, thus learning the structure of it. By enforcing sparsity we can construct a parsimonious model. While classically, integer programming solvers or Monte Carlo were typically used to train these models, recently, continuous optimization based methods have appeared as a reasonable choice for solving such problems in large dimensions~\cite{zheng2018dags,pamfil2020dynotears}. In these works, however, sparsity is encouraged with the use of the surrogate $l1$ norm. Recent works have found that for DAGs, this may not produce the desired sparsity and so the use of an explicit cardinality regularization is preferable~\cite{kucukyavuz2023consistent}.
In this paper, we shall see how the use of a stochastic gradient and hard sparsity constraint can improve the performance and model quality in the considered problems. 
% \footnote{{\color{red} \textbf{Francesco}: We should keep in mind that for adversarial attacks and PGM training there exist papers where sparsity in the solution is induced using the l0 norm (usually as a regularizer in the o.f.). There is no DBN paper that uses such a tool for inducing sparsity. }}

The paper is organized as follows:
In Section~\ref{sec:background}, we introduce some basic  definitions and  preliminary results related to optimality conditions of problem \eqref{eq:prob} that   ease the theoretical analysis.
We then describe the details of the proposed algorithmic scheme in Section~\ref{sec:algorithm}. We then prove almost sure convergence to suitable stationary points in Section \ref{sec:convergence}.
Numerical results on some relevant Machine Learning applications are reported Section~\ref{sec:numerical}. Finally, we draw some conclusions and discuss some possible extensions in Section~\ref{sec:conclusions}.

%Let $I_k$ be active at iteration $k$. Now we take a SG,
%\[
%g_k = \nabla F(x_k,\xi_k)
%\]
%\begin{equation}\label{eq:ind}
%    i\in\arg\min [g_k]_i,\,i\in I_k,\,j\in\arg\max [g_k]_i
%\end{equation}

%%%%%%%%%%%%%%%%%%%%%%%%%%%%%%%%%%%%%%%%%%%%%%%%%%%%%%%%%%%%%%%%%%%%%%%%%%%%%%
\section{Background}\label{sec:background}
Cardinality constrained optimization presents an extensive hierarchy of stationarity conditions, as due to the geometric complexity of the feasible set. This necessitates specialized notions of projection and presents complications due to the projection operation's generic non-uniqueness. 

\paragraph{Definitions and Preliminaries}
\rev{Given a vector $x\in\mathbb{R}^n$, we denote the $i$th component of $x$ by $[x]_i$ and the subvector related to the components with indices in $I\subseteq[1:n]$ by $[x]_I$, while the} 
active and inactive set of $x$ are respectively denoted by
\[
I_{\mathcal{A}}(x):=\{i\in\{1,...,n\},\,\rev{[x]}_i = 0\},\quad 
I_{\mathcal{I}}(x):=\{i\in\{1,...,n\},\,\rev{[x]}_i\neq 0\}.
\]
A set \rev{$T\subseteq\{1,\dots, n\}$ is a \emph{super-support} of $x\in C_K$ if $I_{\mathcal{I}}(x)\subseteq T$ and $\vert T \vert = \rev{K}$.} Let the permutation group of $\{1,...,n\}$ be denoted as $\Sigma_n$ and\rev{,} for a permutation $\sigma \in \Sigma_n$, we write $\rev{[x^{\sigma}]}_i=x_{\sigma(i)}$\rev{, where $\sigma(i)$ denotes the $i$th element of $\sigma$}. For a vector $x\in \mathbb{R}^n$ we denote with $M_i(x)$ the $i$-th largest absolute-value component of $x$, thus we have 
$M_1(x) \rev{\geq} M_2(x)\rev{\geq}\dots\rev{\geq} M_n(x)$. \rev{Let $\sigma^O$ correspond to sorting by this ordering.}

%The \emph{swap permutation} $\tau_{i,j}\in \Sigma_n$ is defined by $t_{i,j}(l):=\{l,\,l\neq i,j; i,\,l=j;\,j,\,l=i\}$
 We finally define the orthogonal projection as
\rev{\[
P_{C_K}(x) \in \arg\min\{\|z-x\|^2,z\in C_K\}=\left\{z\in C_K: \,[z]_{\sigma^O(i)}=M_i(x),\, i\le K,\,[z]_{\sigma^O(i)}=0,\, i> K\right\},
\]}
that is, an $n$-length vector
consisting of the \rev{$K$} components of $x$ with the largest absolute value.
Such operator, as already highlighted in the previous section, is not single-valued due to the inherent non-convexity of the set $C_K$. \rev{For instance consider projecting $(3,1,1)^T$ onto $C_2$. This gives two different points, that is $P_{C_2}((3,1,1)^T)=\{(3,1,0)^T,\ (3,0,1)^T\}$ . A theoretical tool of this kind}
plays a critical role in the development of algorithms for sparsity-constrained optimization (see, e.g., \cite[Section 2]{beck2013sparsity} for a discussion on this matter). 
%We denote the ordering operation $\mathcal{M}_i(x)$ to be such that $M_1(x)=\max_i \vert x_i\vert$ and $M_n(x)=\min_i\vert x_i\vert$ and
%\[
%M_1(x)\ge M_2(x)\ge ... \ge M_n(x)
%\]

\paragraph{Optimality Conditions}

Now we define several optimality conditions  for~\eqref{eq:prob}, borrowing heavily from~\cite{beck2013sparsity}. Observe that a notable characteristic of cardinality constrained optimization is the presence of a hierarchy of optimality conditions, that is, a number of conditions that hold at optimal points that range across levels of \rev{restrictiveness or specificity}.

When restricted to a specific support, the ``no descent directions'' rule still provides a necessary optimality condition, which is referred to as basic feasibility. For a full support, \rev{that is for $\|x^*\|_0=K$, this condition aligns with standard stationarity conditions of non-negative directional derivative along any direction in the linearization of the feasible tangent cone. This linearization only includes directions with nonzero components restricted to $I_{\mathcal{I}}(x^*)$. If the support is not full, i.e., $\|x^*\|_0<K$, the stationarity condition must hold for any potential  super support set of $x^*$. The specific definition is given below:}

\begin{definition}\label{def:bf}
    $x^*\in C_K$  is \emph{Basic Feasible} (BF) for problem \eqref{eq:prob} when
    \begin{itemize}
    \item[1.] $\nabla f(x^*)=0$,  if $\|x^*\|_0<K$,
    \item[2.] $\rev{[\nabla f(x^*)]_i=0}$  for all $i\in I_{\cal I}(x^*)$, if $\|x^*\|_0=K$.
    \end{itemize}
\end{definition}
We thus have that when a point $x^*\in C_K$ is optimal for problem \eqref{eq:prob}, then $x^*$ is a BF point (see Theorem 1 in \cite{beck2013sparsity}). The BF property is however a relatively weak necessary condition for optimality, \rev{meaning that a problem could have a potentially large set of suboptimal BF points}. Consequently, stronger necessary conditions are required to achieve higher quality solutions. This is why we use $L$-stationarity, \rev{a stationarity concept with notation similar to minimization over a simple convex set which, in the case of cardinality constraints, considers the ranking of the absolute value of the gradient components}.

\begin{definition}\label{def:ls}
\rev{Given $L>0$, we say that} $x^*\in C_K$ is \emph{$L$-stationary} for problem \eqref{eq:prob}  when
    \begin{equation}\label{eq:lstat}
        x^*\in P_{C_K}\left(x^*-\frac{1}{L}\nabla f(x^*)\right).
    \end{equation}
\end{definition}

An equivalent analytic property of $L$-stationarity is given by the following lemma.
\begin{lemma}\label{lem:lstatb}\cite[Lemma 2.2]{beck2013sparsity}
     $L$-stationarity at $x^*$ is equivalent to $\|x^*\|_0\le K$ and
    \[
    \vert \rev{[\nabla f(x^*)]_i}\vert \begin{cases} \le L M_K(x^*) \quad & i\in I_{\mathcal{A}}(x^*) \\
    =0 \quad & i\in I_{\mathcal{I}}(x^*). \end{cases}
    \]
\end{lemma}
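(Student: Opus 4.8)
The plan is to unwind the projection membership \eqref{eq:lstat} coordinate by coordinate; throughout, $x^*\in C_K$ is the standing feasibility assumption under which $L$-stationarity is defined, so the requirement $\|x^*\|_0\le K$ in the statement is automatic (and in any case follows from $x^*\in P_{C_K}(\cdot)\subseteq C_K$). Write $y:=x^*-\frac{1}{L}\nabla f(x^*)$. The starting point is the following elementary description of $P_{C_K}$, immediate from its variational definition: for a fixed index set $S$, the distance $\|z-y\|^2$ over $z\in C_K$ supported on $S$ is minimized by $[z]_i=[y]_i$ on $S$ and $[z]_i=0$ off $S$, with optimal value $\|y\|^2-\sum_{i\in S}[y]_i^2$; hence $z\in P_{C_K}(y)$ if and only if $z\in C_K$ and there is an index set $S$ with $|S|=K$ such that $[z]_i=[y]_i$ for $i\in S$, $[z]_i=0$ for $i\notin S$, and $S$ is a ``top-$K$'' set of $|y|$, i.e.\ $\min_{i\in S}|[y]_i|\ge\max_{j\notin S}|[y]_j|$ (when $\|y\|_0<K$ one pads $S$ with indices on which $y$ vanishes). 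In short, $z\in P_{C_K}(y)$ iff some super-support $S$ of $z$ carries the $K$ largest entries of $|y|$.

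Applying this with $z=x^*$: if $x^*\in P_{C_K}(y)$, such a super-support $S\supseteq I_{\mathcal{I}}(x^*)$ exists, and on $S$ the identity $[x^*]_i=[y]_i=[x^*]_i-\frac{1}{L}[\nabla f(x^*)]_i$ forces $[\nabla f(x^*)]_i=0$ for every $i\in S$; in particular this holds on $I_{\mathcal{I}}(x^*)$, which is the second line of the claim. Off $S$ we have $[x^*]_i=0$, hence $|[y]_i|=\frac{1}{L}|[\nabla f(x^*)]_i|$, so the top-$K$ inequality becomes
\[
\min_{i\in S}|[y]_i|\ \ge\ \frac{1}{L}\,\max_{j\notin S}|[\nabla f(x^*)]_j| .
\]
If $\|x^*\|_0=K$ then $S=I_{\mathcal{I}}(x^*)$ and $[y]_i=[x^*]_i$ on $S$, so $\min_{i\in S}|[y]_i|=\min_{i\in I_{\mathcal{I}}(x^*)}|[x^*]_i|=M_K(x^*)$ (the smallest nonzero absolute value is exactly the $K$-th largest), and the displayed inequality reads $|[\nabla f(x^*)]_j|\le L\,M_K(x^*)$ for $j\in I_{\mathcal{A}}(x^*)$ --- the first line of the claim. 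If $\|x^*\|_0<K$ then $S$ contains an index with $[x^*]_i=0$; there $[\nabla f(x^*)]_i=0$ (shown above), so $[y]_i=0$ and $\min_{i\in S}|[y]_i|=0=M_K(x^*)$, and the displayed inequality then forces $[\nabla f(x^*)]_j=0$ off $S$ as well, i.e.\ $\nabla f(x^*)=0$; this again matches the claim since $L\,M_K(x^*)=0$.

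For the converse, assume the stated gradient condition. If $\|x^*\|_0<K$ it gives $\nabla f(x^*)=0$, whence $y=x^*$ and $x^*\in P_{C_K}(x^*)=P_{C_K}(y)$ trivially. If $\|x^*\|_0=K$, put $S:=I_{\mathcal{I}}(x^*)$: then $[y]_i=[x^*]_i$ on $S$ (since $\nabla f$ vanishes there), and for $j\in I_{\mathcal{A}}(x^*)$ we get $|[y]_j|=\frac{1}{L}|[\nabla f(x^*)]_j|\le M_K(x^*)=\min_{i\in S}|[x^*]_i|=\min_{i\in S}|[y]_i|$, so $S$ is a top-$K$ set of $|y|$ and zeroing the complement of $S$ recovers $x^*$; hence $x^*\in P_{C_K}(y)$, i.e.\ \eqref{eq:lstat} holds.

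The argument is bookkeeping rather than depth. The one point that needs care is to keep the support $I_{\mathcal{I}}(x^*)$ distinct from the chosen size-$K$ super-support $S$ and to notice that, when $\|x^*\|_0<K$, the ``padding'' indices of $S$ pin the corresponding entries of $y$ --- and hence, through the top-$K$ inequality, all of $\nabla f(x^*)$ --- to zero. Reading ``$x^*\in P_{C_K}(y)$'' as the existence of a valid top-$K$ set, rather than uniqueness of the projection, is the other place where a word of caution is warranted.
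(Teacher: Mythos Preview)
The paper does not supply its own proof of this lemma; it simply cites \cite[Lemma~2.2]{beck2013sparsity}. Your argument is correct and is essentially the standard proof given in that reference: characterize $P_{C_K}(y)$ via a top-$K$ support set $S$, read off $[\nabla f(x^*)]_i=0$ on $S$ from $[x^*]_i=[y]_i$, and use the top-$K$ inequality to bound the remaining components, splitting on whether $\|x^*\|_0=K$ or $\|x^*\|_0<K$.
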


\rev{It can be shown that $L$-stationarity is a more restrictive condition than Basic Feasibility}:
\begin{corollary}
\cite[Corollary 2.1]{beck2013sparsity}
  Suppose that $x^*\in C_k$ is an L-stationary of problem \eqref{eq:prob} for some $L$. Then $x^*$ is BF for problem \eqref{eq:prob}.
\end{corollary}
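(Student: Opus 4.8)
The plan is to derive the claim directly from the analytic characterization of $L$-stationarity given in Lemma~\ref{lem:lstatb}, and to organize the argument around the two cases that appear in Definition~\ref{def:bf}, namely whether the support of $x^*$ is full ($\|x^*\|_0 = K$) or not ($\|x^*\|_0 < K$). Since $L$-stationarity already guarantees $\|x^*\|_0 \le K$, feasibility of $x^*$ is not at issue, and only the stationarity-type conditions have to be checked.

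First I would treat the case $\|x^*\|_0 = K$. Here Lemma~\ref{lem:lstatb} directly yields $[\nabla f(x^*)]_i = 0$ for every $i \in I_{\mathcal{I}}(x^*)$, which is exactly item~2 of Definition~\ref{def:bf}; nothing further is needed in this case.

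Next I would handle $\|x^*\|_0 < K$. The key observation is that a vector with strictly fewer than $K$ nonzero entries has vanishing $K$-th largest absolute-value component, i.e.\ $M_K(x^*) = 0$. Substituting this into the bound of Lemma~\ref{lem:lstatb}, for every $i \in I_{\mathcal{A}}(x^*)$ we obtain $\vert [\nabla f(x^*)]_i \vert \le L\, M_K(x^*) = 0$, hence $[\nabla f(x^*)]_i = 0$, while for $i \in I_{\mathcal{I}}(x^*)$ the same lemma already gives $[\nabla f(x^*)]_i = 0$. Since $I_{\mathcal{A}}(x^*) \cup I_{\mathcal{I}}(x^*) = \{1,\dots,n\}$, this forces $\nabla f(x^*) = 0$, which is item~1 of Definition~\ref{def:bf}. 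Combining the two cases shows $x^*$ is BF.

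I do not expect any real obstacle here: the only substantive step is recognizing that $M_K(x^*) = 0$ whenever the support is not full, after which the implication collapses to a one-line consequence of Lemma~\ref{lem:lstatb}. One could alternatively argue straight from the projection-fixed-point form~\eqref{eq:lstat}, but passing through the analytic characterization keeps the proof short and transparent.
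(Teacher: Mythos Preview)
Your argument is correct and is essentially the standard one: split on whether the support is full, and in the non-full case use $M_K(x^*)=0$ to force $\nabla f(x^*)=0$ via Lemma~\ref{lem:lstatb}. Note that the paper does not supply its own proof of this corollary but simply cites \cite[Corollary~2.1]{beck2013sparsity}; your derivation matches the natural route through the analytic characterization.
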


In addition, the likely intuition that the $L$-stationarity is related to the gradient Lipschitz constant is correct:
\begin{theorem}\cite[Theorem 2.2]{beck2013sparsity}
    If $x^*$ is an optimal solution for problem \eqref{eq:prob} then it is $L$-stationary for all~\mbox{$L>L(f)$}.
\end{theorem}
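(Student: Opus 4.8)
The plan is to argue by contradiction, exploiting the quadratic upper bound on $f$ that comes from Lipschitz continuity of $\nabla f$. Fix an arbitrary $L>L(f)$ and suppose $x^*$ is optimal for \eqref{eq:prob} but fails to be $L$-stationary (Definition~\ref{def:ls}), i.e.\ $x^*\notin P_{C_K}\!\left(x^*-\tfrac1L\nabla f(x^*)\right)$. Since $C_K$ is closed, this projection set is nonempty, so I can pick some $\tilde x\in P_{C_K}\!\left(x^*-\tfrac1L\nabla f(x^*)\right)$; by the failure of $L$-stationarity $\tilde x\neq x^*$, and $\tilde x\in C_K$ is feasible.

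The crux is to recognize the projection as the minimizer of a quadratic surrogate of $f$ at $x^*$. Expanding $\bigl\|z-(x^*-\tfrac1L\nabla f(x^*))\bigr\|^2$ and discarding the term independent of $z$ shows that
\[
P_{C_K}\!\left(x^*-\tfrac1L\nabla f(x^*)\right)=\arg\min_{z\in C_K} g_L(z),\qquad g_L(z):=f(x^*)+\langle\nabla f(x^*),z-x^*\rangle+\tfrac L2\|z-x^*\|^2 .
\]
In particular, since $x^*\in C_K$ and $\tilde x$ minimizes $g_L$ over $C_K$, we get $g_L(\tilde x)\le g_L(x^*)=f(x^*)$.

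Next I would invoke the descent lemma: $L(f)$-Lipschitz continuity of $\nabla f$ yields $f(\tilde x)\le f(x^*)+\langle\nabla f(x^*),\tilde x-x^*\rangle+\tfrac{L(f)}2\|\tilde x-x^*\|^2$. Since $L(f)<L$ and $\tilde x\neq x^*$, the last term is strictly smaller than $\tfrac L2\|\tilde x-x^*\|^2$, so $f(\tilde x)<g_L(\tilde x)\le f(x^*)$. Thus $\tilde x\in C_K$ is feasible with $f(\tilde x)<f(x^*)$, contradicting optimality of $x^*$; hence $x^*$ is $L$-stationary, and since $L>L(f)$ was arbitrary the claim follows.

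I do not anticipate a real obstacle. The two points needing care are the set-valuedness of $P_{C_K}$ — resolved by choosing an arbitrary element and using $x^*\notin P_{C_K}(\cdot)$ to force $\tilde x\neq x^*$ — and the precise role of the strict inequality $L>L(f)$, which is exactly what upgrades the descent-lemma bound to a strict one and makes the contradiction work uniformly, without a case split on whether $\|x^*\|_0$ equals $K$ or is smaller. (With only $L\ge L(f)$ one would instead have to note that $x^*\notin\arg\min_{z\in C_K}g_L(z)$ forces $g_L(\tilde x)<g_L(x^*)$, which is still true but less immediate.)
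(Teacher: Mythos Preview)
Your proposal is correct and follows essentially the same approach as the original proof in \cite[Theorem~2.2]{beck2013sparsity}, which the present paper cites without reproducing: one identifies $P_{C_K}(x^*-\tfrac1L\nabla f(x^*))$ with the minimizers over $C_K$ of the quadratic surrogate $g_L$, and then combines $g_L(\tilde x)\le g_L(x^*)=f(x^*)$ with the descent lemma and the strict inequality $L>L(f)$ to force $f(\tilde x)<f(x^*)$ whenever $\tilde x\ne x^*$. The only cosmetic difference is that Beck--Eldar argue directly (showing every projection point must coincide with $x^*$), while you phrase it as a contradiction; the content is identical.
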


\rev{To see the distinction between BF and L-stationary, we consider $x^*=(0,1)$ with $\nabla f(x^*)=(-4,0)$ and $K=1$. It is easy to see that $x^*$ satisfies BF but not $L$-stationarity, with $L<4$. This example illustrates how the consideration of the structure of the feasible set for cardinality constrained optimization introduces the necessity of incorporating combinatorial properties associated with ranking gradient components.}

\rev{In this context,} $L$-stationarity is stronger than BF in the sense that the latter only considers a linearized feasible direction stationarity measure, which is also done in the work on sequential conditions in~\cite{kanzow2021sequential}. \rev{Observe that given $x\in \mathbb{R}^n $ and any $d\in\mathbb{R}^n$ such that $d_i\neq 0$ for some 
$i\in I_{\mathcal{A}}(x)$, $x+td\in C_K$ for small $t>0$ is only possible if $\|x\|_0< K$. Thus, if $\|x\|_0=K$, for there to be any such $d_i$, there would have to be some $w\in \mathbb{R}^n$ such that $[w]_j=-[x]_j$ for $j\in I_{\mathcal{I}}(x)$, and the new point would have to be of the form $x+w+t d$. In other words, any feasible  direction from which a zero component becomes non-zero, would first require a swap,  that is the assignment to zero, for another non-zero component, in order to guarantee the cardinality constraint to be satisfied.} Thus there is no feasible  direction in which a zero component becomes \rev{non-zero} when the cardinality constraint is active. $L$-stationarity enables an \rev{exploration of gradient vector components in order to propose directions with distinct support from the current point, which is more aligned to the spirit of cardinality-constrained optimization problems}.

% Notably, we can see that Iterative Hard Thresholding~\cite{beck2013sparsity} can generate a CC-AM-S sequence. 

% Note that CW optimality are a type of global optimality conditions, and so do not have associated approximate sequential conditions, which are fundamentally local in nature, like their corresponding stationarity conditions. 

\paragraph{Iterative Hard Thresholding}
An important \rev{ algorithmic tool, often used in the machine learning literature to deal with 
cardinality-constrained problems,} is the \emph{Hard Thresholding Operator} (see, e.g., \cite{beck2013sparsity,blumensath2008iterative} for further details). 
%Consider the operator $\mathbf{HT}^{x,\delta}(v)$ applied to a vector $v$ as one that projects
%$v$ onto the sparsity constraint together with a trust radius $\delta$, i.e.,
%\begin{equation}\label{eq:hardth}
%\mathbf{HT}^{x,\delta}(v) \in \arg\min_{w} \|v-w\|,\,\|w\|_0\le K,\,\|w-x\|\le \delta
%\end{equation}
%Note that this can be readily computed sequentially, by first applying the classical hard thresholding operator and subsequently clipping the norm of the step.
%Since $\hat{x}_k=\mathbf{HT}^{x_k,\delta_k}\left(x_k-\alpha g_k\right)$, we have
%\begin{equation}\label{eq:hardthsol}
%\hat x_k \in \Bigl\{\argmin_w \|x_k - \alpha g_k - w\| \colon \|w\|_0 \le K, \|w-x_k\| \le \delta_k\Bigr\} 
%\end{equation}
%or equivalently,
%\begin{equation}\label{xk_hat}
%\hat x_k = P_{C_K} \biggl(x_k - \alpha\min\biggl\{1,\frac{\delta_k}{\alpha\|g_k\|}\biggr\} g_k\biggr).
%\end{equation}
Consider the operator \rev{$\mathbf{HT}(v)$ } applied to a vector $v$ as one that projects
$v$ onto the sparsity constraint, i.e.,
\begin{equation}\label{eq:hardth}
\rev{\mathbf{HT}}(v) \in \arg\min_{w} \left\{\|v-w\|,\,\|w\|_0\le K\right\} := P_{C_K}\left(v\right).
\end{equation}\rev{Iterative Hard Thresholding (IHT) is hence an optimization algorithm designed to solve deterministic sparse optimization problems, particularly those involving cardinality constraints,  which iteratively updates a solution by combining gradient steps with the Hard Thresholding operator:
\begin{equation}\label{eq:IHTscheme}
        x_{k+1}\in P_{C_K}\left(x_k-\alpha\nabla f(x_k)\right), \quad k=0,\ 1,\ 2,\dots.
    \end{equation}
This fixed point method is able to enforce the L-stationary condition in the accumulation points of the generated sequence when a suitable stepsize $\alpha>0$ is chosen (see, e.g., \cite{beck2013sparsity} and references therein for further details).
}

% \textbf{More Notes}
% \begin{enumerate}
%     \item $L$-stationarity is only relevant for Lipschitz continuous gradients, but we can assume that to start with
 %    \item In this case then it seems CC-AM-S generates a BP point, and IHT generates a BF point. In order to enforce the additional condition regarding the norm of the gradient components
  %   \item Note that we would need to show a true advantage over IHT, which seems to find BF points
% \end{enumerate}

%%%%%%%%%%%%%%%%%%%%%%%%%%%%%%%%%%%%%%%%%%%%%%%%%%%%%%%%%%%%%%%%%%%%%%%%%%%%%%
\section{\rev{Probabilistic Iterative Hard Thresholding Algorithm}}\label{sec:algorithm}
\rev{This section introduces the Probabilistic Iterative Hard Thresholding algorithm, a new method that enables to tackle  cardinality-constrained stochastic optimization problems, and the core ideas related to the building blocks of the algorithm, that is the Pseudo Hard Thresholding Operator and the stochastic function estimates. }

\rev{\paragraph{Pseudo Hard Thresholding Operator}
Given $v \in \mathbb{R}^n$, let us define $\tilde{\Sigma}(v)$ as the set of all \emph{sorting permutations}, that is, $\sigma \in \tilde{\Sigma}(v)$ if and only if $[v]_{\sigma(1)} \geq [v]_{\sigma(2)} \geq \ldots \geq [v]_{\sigma(n)}$, where non-uniqueness of the operation arises in case of ties}.
Recall that, the sparse projection operation $P_{C_K}{(v)}$ for a vector $v$ amounts to \rev{sorting $\{\vert [v]_i\vert\}$ in order to define some $\sigma \in \tilde{\Sigma}(|v|)$} and then keeping the $K$ largest magnitude components of $v$ while setting the rest to zero. 

\rev{As observed above, an algorithmic iterative descent procedure would involve the negative of the gradient of $f$, or an estimate thereof}. Indeed, as the objective function is an expectation, we do not have access to the exact value of the $\nabla f(x)$ and hence the magnitude ranking of its components. Thus, \rev{it is necessary for us} to use noisy gradient estimates $\nabla F(x,\hat{\xi})$ to try to \rev{guess the} actual ranking of the component magnitudes.

Asymptotically, we want to ensure that this sparse projector estimates the true ranking at any limit point. Given that the sequence of iterates \rev{provides} incrementally and asymptotically increasing sample sizes  \rev{of points in the neighborhood of the solution limit point}, this presents a natural opportunity to use the algorithm iterate sequence itself to perform this estimate. By relying on consistency in the asymptotic sampling regime, we obtain statistical guarantees on accurate identification. 

Let $x_k$ correspond to the current iterate \rev{of the given algorithm. Now we define our particular sequential ranking estimate for the magnitude of the vector components related to the gradient of $f(x_k)$. }

\rev{During the earlier iterations of the algorithm, there accumulated a set of permutations $S_k=\{\sigma^{(j)}\}_{j\in[J]},\, \sigma^{(j)}\in \Sigma_n$ with coefficient weights $\{\omega^{(j)}\}_{j\in[J]},\, \omega\in \Delta_J$, where we use $\Delta_m$ to denote the unit simplex of dimension $m$, and $[J]=\{1,\dots, J\}$, with $J=|S_k|$. Now we compute a new gradient estimate at $k$, and perform the inductive step on the procedure of updating the set $S_k$.}

Specifically, \rev{given a noisy evaluation $g_k\approx \nabla f(x_k)$, we take a \emph{clipped} gradient step, wherein we step in the negative direction of the scaled negative gradient \rev{estimate} $-\alpha\min\left\{1,\frac{\delta_k}{\alpha\left\|g_k\right\|}\right\}g_k$, with $\delta_k>0$ the clipping level, which is updated during the iterations (see below), and $\alpha>0$ a constant which, roughly speaking, represents a sort of stepsize along the chosen direction.
It is easy to see that the clipped gradient step  projects the term $-\alpha g_k$ within a ball of  radius $\delta_k$.
Then we consider a sorting permutation of the vector:}
\begin{equation}\label{eq:sigk}
\sigma_k\in\tilde{\Sigma}\left(x_k-\alpha\min\left\{1,\frac{\delta_k}{\alpha\left\|g_k\right\|}\right\}g_k\right),    
\end{equation}
% \rev{and} perform exponential smoothing (exponential moving average) on the estimate, with \rev{non-negative} smoothing parameter \rev{$\alpha_s<1$}:
% \begin{equation}\label{eq:setupdate}
% \begin{array}{l}
%     \sigma_k =\sigma^{(j)}\in S_k \Longrightarrow  \left\{\begin{array}{l}\omega\leftarrow (1-\alpha_s)\omega, \\ \omega^{(j)}\leftarrow \omega^{(j)}+\alpha_s, \end{array}\right.\\
%     \sigma_k \notin S_k \Longrightarrow \left\{\begin{array}{l} S_k\leftarrow S_k\cup \{\sigma_k\} ,\, \rev{J\leftarrow J+1} \\  \omega\leftarrow  (1-\alpha_s)\omega, \\ \omega^{(J)}\leftarrow \alpha_s .
% \end{array} \right.
% \end{array}
% \end{equation}

% This accomplishes the following: We maintain a set of possible permutations with associated mixture weights.  With each new iteration,  we sort the components of \rev{ the point obtained by performing a gradient-like step on the noisy gradient estimate.} If this sorting permutation has been found before, then we \rev{increase} the weight corresponding to that permutation and lower the weights of others. Otherwise, i.e. this is a new permutation, we add it to the list of options.

% Now, let \rev{$\hat{\sigma}_k$} be such that 
% \begin{equation}\label{eq:sighat}
%     \rev{\hat{\sigma}_k}=\sigma^{(j)}\in S_k\text{ with }\omega^{(j)}=\arg\max\limits_{l\in[J]}\omega^{(l)}.
% \end{equation} 
% Thus, rather than taking the maximal components based on the current sorting, we use the moving average historical estimate. 
Then, taking 
\begin{equation}\label{eq:ikdef}
\rev{I_k=\{{\sigma}_k(i), \, i=1,\dots, K\}},
\end{equation}
that is, the set of indices whose components are largest \rev{according to ${\sigma}_k$, we introduce} the \textbf{Pseudo Hard Thresholding} operator corresponding to iteration $k$, defined as follows:
\begin{equation}\label{eq:pshtk}
\rev{\mathbf{PHT}^k(v) \in \arg\min_{w} \left\{\|v-w\|,\,[w]_{[1:n]\setminus I_k} = 0\right\}=P_{I_k}(v),}
% \mathbf{HT}^{x,\delta,k}(v) \in \arg\min_{w} \left\{\|v-w\|,\,w_{[n]\setminus I_k} = 0,\,\|w-x\|\le \delta\right\}.
\end{equation}
\rev{where $P_{I_k}(v)$ is the projection of $v$ over the subspace of $\mathbb{R}^n$ defined by those points having support in~$I_k$.}
%Let $[g_k]_{I_k}$ be a vector of length $\vert I_k\vert$ whose components are the corresponding components of $g_k$ at the indices of $I_k$. 
% We take a \emph{clipped} step, wherein we step in the negative direction of the scaled negative gradient \rev{estimate} $-\alpha\min\left\{1,\frac{\delta_k}{\alpha\left\|g_k\right\|}\right\}g_k$. %with $\alpha$ being a positive constant.
\rev{As highlighted above, we take a \emph{clipped} gradient step and then apply our Pseudo-Hard Thresholding operator on this point:
\begin{equation}\label{xk_proj}
\hat x_k = \mathbf{PHT}^k\left(x_k - \alpha\min\biggl\{1,\frac{\delta_k}{\alpha\|g_k\|}\biggr\} g_k\right)= P_{I_k} \biggl(x_k - \alpha\min\biggl\{1,\frac{\delta_k}{\alpha\|g_k\|}\biggr\} g_k\biggr).
\end{equation}}
\rev{Observe that unlike Hard Thresholding, the Pseudo Hard Thresholding operator can be computed in a straightforward closed form expression of simply setting the components not corresponding to the estimated top $K$ to be zero, that is,}
\begin{equation}\label{x_hataltin}
[\hat x_k]_i =
\begin{cases} 
0 & i\notin I_k, \\
\left[x_k-\alpha\min\left\{1,\frac{\delta_k}{\alpha\left\|g_k\right\|}\right\}g_k\right]_i & i\in I_k.
\end{cases}
\end{equation}
\rev{Note that, using known properties of the projection operator~\cite{bertsekas:1999}, for all $v \in \mathbb R^n$ we can write
$(x_k-v-P_{I_k}(x_k-v))^T(x_k-P_{I_k}(x_k-v)) \le 0$. So, recalling~\eqref{xk_proj}, we have
\[
g_k^T(\hat{x}_k-x_k) \le -\frac 1{\alpha} \max\biggl\{1,\frac{\alpha\|g_k\|}{\delta_k}\biggr\} \|\hat{x}_k-x_k\|^2.
\]}
% {\color{red} AC: The above inequality is not true because $I_k$ defines a non-convex set, so it is not true that
% $(y-P_{I_k}(y))^T (x-P_{I_k}(y)) \le 0$ for all $x \in \mathbb R^n$. For instance, consider a point $y$ such that $y_i = 0$ for all $i \in I_k$ and $y_i = x_i$ for all $i \in I_k$. Then, $P_{I_k}(y) = 0$ and
% $(y-P_{I_k}(y))^T (x-P_{I_k}(y)) = x^Ty = \sum_{i \notin I_k} x_i y_i = \sum_{i \notin I_k} x_i^2 = 0$.}

This presents an opportunity to \rev{establish a descent lemma adapted to} the context of cardinality constrained optimization problems. To this end, define
\begin{equation}\label{eq:htil}
h_k(y) = f(x_k) + g_k^T (y-x_k),
\end{equation}
so that
\begin{equation}\label{eq:desc}
h_k(\hat{x}_k)-h_k(x_k) = g_k^T(\hat{x}_k-x_k) \le -\frac 1{\alpha} \max\biggl\{1,\frac{\alpha\|g_k\|}{\delta_k}\biggr\} \|\hat{x}_k-x_k\|^2  \le -\frac 1{\alpha}\|\hat{x}_k-x_k\|^2.
\end{equation}

\bigskip\par

% This presents an opportunity to present a sort of Descent Lemma in the context of cardinality constrained optimization problems. To this end, define
% \begin{equation}\label{eq:htil}
% h_k(y) = f(x_k) + g_k^T (y-x_k)
% \end{equation}
% We can infer from~\eqref{eq:pshtk} that,
% \[
%     h_k(\hat{x}_k)-h_k(x_k) \le g_k^T(\hat{x}_k-x_k)
%     \le -\frac{\left\|\left[g_k\right]_{I_k}\right\|}{\delta_k}\left\|\hat{x}_k-x_k\right\|^2 \le - \frac{1}{\alpha}\left\|\hat{x}_k-x_k\right\|^2
% \]
% where the last inequality holds in the event that the second term is the minimum by the fact that in that case $\delta_k\le \alpha\|g_k\|$ and so $-\frac{\alpha\|g_k\|}{\delta_k}\le -1$.

\paragraph{Accuracy Estimates}
\rev{In our algorithm we require that both the function values $f(x_k)$ and $f(x_k+s_k)$ as well as the first-order models used to compute the step are sufficiently accurate with high probability. Here we present some standard definitions that make these notions precise. In particular, the following definitions of accurate function estimates and of accurate models are similar to the ones in \cite{chen2018stochastic}.
}
\begin{definition}\label{def:facc}
Define $s_k=\hat{x}_k-x_k$. The function estimates $f_k^0$ and $f_k^s$ are $\varepsilon_f$-accurate estimates of $f(x_k)$ and $f(x_k+s_k)$, respectively, for a given $\delta_k$ if
\begin{equation}\label{def:accestf}
|f_k^0-f(x_k)|\leq \varepsilon_f\delta_k^2\quad \mbox{and} \quad |f_k^s-f(x_k+s_k)|\leq \varepsilon_f \delta_k^2.
\end{equation}
\end{definition}

\begin{definition}\label{def:modelacc}
The model for generating the iterate is $\kappa$-$\delta_k$, or $(\kappa_f,\kappa_g)$-$\delta_k$ accurate when
\begin{equation}\label{eq:acc}
\|\nabla \rev{f}(y)-g_k\|\le \kappa_g \delta_k \quad \text{and} \quad \big\lvert f(y)-f(x_k)-g_k^T(y-x_k)\big \rvert \le \kappa_f \|y-x_k\|\delta^2_k
\end{equation}
for all \rev{$y$ such that $[y]_{I_k}\in B([x_k]_{I_k},\delta_k)$}.
\end{definition}
    \rev{In the following proposition we report a result implied by equation \eqref{eq:acc} that will be useful for the convergence analysis.
 \begin{proposition}\label{prop:accik}
If~\eqref{eq:acc} holds for all $y$ such that $[y]_{I_k}\in B([x_k]_{I_k},\delta_k)$, then
\begin{equation}\label{eq:accik}
\|
[\nabla f(y)-g_k]_{I_k}\|\le \kappa_g \delta_k \quad  \text{and} \quad  \big\lvert f(y)-f(x_k)-[g_k]_{I_k}^T[y-x_k]_{I_k}\big \rvert \le \kappa_f \|[y-x_k]_{I_k}\|\delta^2_k 
\end{equation}
for all $y$ such that $[y]_{I_k}\in B([x_k]_{I_k},\delta_k)$.
\end{proposition}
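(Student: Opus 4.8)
The plan is to establish the two inequalities in \eqref{eq:accik} separately, each as essentially a one-line consequence of the corresponding inequality in \eqref{eq:acc}, exploiting that passing to $I_k$-subvectors only discards terms and cannot enlarge norms. No new estimates are produced; everything is a rewriting of \eqref{eq:acc} once the right point at which to invoke it is identified.

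For the gradient bound I would use that, for any $v\in\mathbb R^n$, the subvector satisfies $\|[v]_{I_k}\|\le\|v\|$. Taking $v=\nabla f(y)-g_k$ and combining with the first inequality of \eqref{eq:acc} gives $\|[\nabla f(y)-g_k]_{I_k}\|\le\|\nabla f(y)-g_k\|\le\kappa_g\delta_k$ for every $y$ with $[y]_{I_k}\in B([x_k]_{I_k},\delta_k)$, which is the first inequality of \eqref{eq:accik}. For the model bound the key remark is that both the linear term $[g_k]_{I_k}^T[y-x_k]_{I_k}$ and the factor $\|[y-x_k]_{I_k}\|$ in \eqref{eq:accik} see $y$ only through its $I_k$-components. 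I would therefore apply the second inequality of \eqref{eq:acc} not at $y$ but at the auxiliary point $\bar y$ that agrees with $y$ on $I_k$ and with $x_k$ off $I_k$; since $[\bar y]_{I_k}=[y]_{I_k}\in B([x_k]_{I_k},\delta_k)$, the bound \eqref{eq:acc} is available at $\bar y$. Because $\bar y-x_k$ has support contained in $I_k$, one has $g_k^T(\bar y-x_k)=[g_k]_{I_k}^T[y-x_k]_{I_k}$ and $\|\bar y-x_k\|=\|[y-x_k]_{I_k}\|$, so \eqref{eq:acc} at $\bar y$ reads $|f(\bar y)-f(x_k)-[g_k]_{I_k}^T[y-x_k]_{I_k}|\le\kappa_f\|[y-x_k]_{I_k}\|\delta_k^2$. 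For the points actually used in the analysis — $x_k$ itself and $x_k+s_k=\hat x_k$, which share their off-$I_k$ components with $x_k$ — one has $\bar y=y$, hence $f(\bar y)=f(y)$, and the displayed inequality is exactly the second inequality of \eqref{eq:accik}.

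I do not anticipate a genuine obstacle: the argument creates no new constants and chases no new inequalities. The single point that must be spelled out carefully is the reduction to the $I_k$-subspace through $x_k$ — namely, why \eqref{eq:acc} may legitimately be evaluated at $\bar y$ rather than at $y$, and why, for the iterates and trial points relevant to the convergence proof, their components outside $I_k$ coincide with those of $x_k$ so that $f(\bar y)=f(y)$. It is this structural observation, and not any calculation, that makes the proposition go through.
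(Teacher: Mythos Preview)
Your construction of $\bar y$ --- keep $[y]_{I_k}$ and overwrite the remaining components with those of $x_k$, then invoke \eqref{eq:acc} at $\bar y$ --- is exactly the paper's argument (the paper writes $\tilde y$ for your $\bar y$ and casts it as a contradiction rather than a direct derivation). The gradient bound is handled identically in both.

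You are in fact more careful than the paper on one point: you correctly observe that this route only delivers the second inequality of \eqref{eq:accik} with $f(\bar y)$ in place of $f(y)$, so the stated conclusion for \emph{all} $y$ with $[y]_{I_k}\in B([x_k]_{I_k},\delta_k)$ does not follow unless $[y]_{[1:n]\setminus I_k}=[x_k]_{[1:n]\setminus I_k}$ (indeed, take $[y]_{I_k}=[x_k]_{I_k}$ but $y\ne x_k$: the right-hand side vanishes while $f(y)\ne f(x_k)$ generically). The paper's final display silently keeps $f(y)$ and $\|y-x_k\|$ where the construction actually produces $f(\tilde y)$ and $\|\tilde y-x_k\|$. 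Your remedy, however, introduces its own gap: the assertion that $\hat x_k$ ``shares its off-$I_k$ components with $x_k$'' is not justified. By~\eqref{x_hataltin} one has $[\hat x_k]_{[1:n]\setminus I_k}=0$, but $I_k$ is the top-$K$ index set of the \emph{clipped gradient step}~\eqref{eq:sigk}, not of $x_k$, and it need not contain the support of $x_k$; hence $[x_k]_{[1:n]\setminus I_k}$ may well be nonzero and $\bar y\ne y$ at $y=\hat x_k$.
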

\begin{proof}
Let~\eqref{eq:acc} hold.
It is straightforward to verify that the first inequality in~\eqref{eq:accik} follows from the first inequality in~\eqref{eq:acc} since $\|[\nabla f(y)-g_k]_{I_k}\| \le \|\nabla f(y)-g_k\|$. To show that the second inequality in~\eqref{eq:accik} holds as well, assume by contradiction that there exists $y \in \mathbb R^n$ such that $[y]_{I_k}\in B([x_k]_{I_k},\delta_k)$ and
\[
\big\lvert f(y)-f(x_k)-[g_k]_{I_k}^T[y-x_k]_{I_k}\big \rvert > \kappa_f \|[y-x_k]_{I_k}\|\delta^2_k.
\]
Now, define $\tilde y \in \mathbb R^n$ as follows:
\[
[\tilde y]_i =
\begin{cases}
[y]_i \quad & \text{if } i \in I_k, \\
[x_k]_i \quad & \text{if } i \notin I_k.
\end{cases}
\]
Then, $[\tilde y]_{I_k} \in B([x_k]_{I_k},\delta_k)$ and
\[
\big\lvert f(y)-f(x_k)-g_k^T(\tilde y-x_k)\big \rvert =
\big\lvert f(y)-f(x_k)-[g_k]_{I_k}^T[\tilde y-x_k]_{I_k}\big \rvert > \kappa_f \|[y-x_k]_{I_k}\|\delta^2_k = \kappa_f \|y-x_k\|\delta^2_k,
\]
thus contradicting the second inequality in~\eqref{eq:acc}.
\end{proof}
 }

%\begin{definition}\label{def:modelaccb}
%Define $s_k=\|\hat{x}_k-x_k\|_0$.
%    The model for generating the iterate is $\kappa$-$\delta_k$ accurate when, for all $i$
%    \begin{equation}\label{eq:acc}
%        [\nabla F(y)]_i-[g_k]_i\le \kappa \delta_k,\,\vert f(y)-f(x_k)-[g_k]_i([y]_i-[x_k]_i)\vert \le \kappa \vert [y]_i-[x_k]_i\vert \delta^2_k
%    \end{equation}
%    for all $y\in B(x_k,\delta_k)$
%\end{definition}
\rev{
\paragraph{Algorithm Description}
See Algorithm \ref{alg:iht} for a summary of the scheme we present in this paper.
At the initialization, a feasible starting point $x_0\in C_K$ and a step $\delta_0$ are chosen. Then, at each iteration, a random variable $\xi_k$ is sampled from a distribution $\Xi$  and the gradient estimate $g_k=\nabla F(x_k, \xi_k)$ at the point $x_k$ with respect to $x$ with the realization defined by sample (e.g. minibatch) $\xi_k$ is calculated (see Step 3 of the algorithm). In Step~4, the sorting permutation $\sigma_k$ 
with largest weight is selected and used to define the set of indices $I_k$ related to the largest components. }

%and the weight vector $\omega$ is updated accordingly, while, in Step 5, the sorting permutation $\hat \sigma_k$ 

\rev{The Pseudo Hard Thresholding operator is then used to calculate the trial point $\hat x_k$.
The algorithm hence generates an estimate of the true objective function $f$ at the trial point $\hat x_k$. By also computing an estimate of $f$ at the current point $x_k$ in Step 7, it can perform a test, in Step 8, on whether there is a sufficient reduction of the model. If so, we have a \emph{successful iteration}, the iterate is updated to equal the computed estimate $\hat{x}_k$. Moreover, the parameter $\delta_k$ used to define the required accuracy conditions on the gradient estimate as well as the clipping level associated with the step normalization is increased, as long as it is smaller than some large threshold $\delta_{max}$. This relaxation of the subproblem requirements in stringency of accuracy and descent permits for potentially larger steps at the next iteration, promoting faster convergence when estimates are an accurate representation of the underlying objective function (see Step 9).}

\rev{Otherwise, the sample estimate for sufficient decrease is not satisfied, we have an \emph{unsuccessful iteration}, the  $\delta_k$ is reduced, and the iterate stays the same (see Step 11). However, it is important to note that a successful iteration does not necessarily give a reduction of the true function $f$. In fact, such a function is not available and the acceptance condition is based only on estimates of $f(x_k)$ and $f(\hat x_k)$. 
}

\begin{algorithm}
	\caption{Probabilistic Iterative Hard Thresholding}
	\label{alg:iht}
	\begin{algorithmic}[1]
		\STATE \textbf{Initialization:} $x_0\in C_K$, $\delta_0 \in (0,\delta_{max}]$, Parameters $\delta_{max} > 0$, $\gamma \in (0,1)$ %\rev{, $\alpha>0$}
        \FOR{$k=0,1,2,\ldots$}
        \STATE Sample a minibatch $\xi_k\sim \Xi$ and compute $g_k= \nabla F(x_k,\xi_k)$
        \STATE Compute \rev{$\sigma_k$} by~\eqref{eq:sigk} %and update $\omega$ by~\eqref{eq:setupdate}
       % \STATE Compute \rev{$\hat{\sigma}_k$} from~\eqref{eq:sighat} and use it to define $I_k$ by~\eqref{eq:ikdef}
		\STATE Compute $\hat{x}_k$ from the \rev{Pseudo Hard Thresholding}~\eqref{x_hataltin}
        \STATE Compute stochastic estimates $f^s_k\approx f(\hat{x}_k),\,f^0_k\approx f(x_k)$
        \IF{$\dfrac{f^0_k-f^s_k}{\|[g_k]_{I_k}\|\delta_k}\ge \eta_1$ and $\|[g_k]_{I_k}\|\ge \eta_2 \delta_k$}
        \STATE Set $\delta_{k+1} = \min \{\gamma \delta_k, \delta_{max}\}$, let $x_{k+1}=\hat{x}_k$ 
        \ELSE
        \STATE Set $\delta_{k+1} = \gamma^{-1} \delta_k$, let $x_{k+1}=x_k$
        \ENDIF
        \ENDFOR 
	\end{algorithmic}
\end{algorithm}

%%%%%%%%%%%%%%%%%%%%%%%%%%%%%%%%%%%%%%%%%%%%%%%%%%%%%%%%%%%%%%%%%%%%%%%%%%%%%%%
\section{Convergence Theory}\label{sec:convergence}
Now we develop our \rev{analysis} for justifying the long term convergence of \rev{Algorithm \ref{alg:iht}} based on classic arguments on probabilistic models given in~\cite{chen2018stochastic}(see also~\cite{bandeira2014convergence,cartis2018global}). To this end, we remark that the iterates, being dependent on random function and gradient estimates, define a stochastic process $X_k$. The algorithm itself is a realization, thus denoting $x_k=X_k(\omega)$, $\delta_k=\Delta_k(\omega)$, etc. for $\omega$ the random element defining the realization. Similar as to the \rev{previous works}, we consider a filtration with the sigma algebra $\mathcal{F}_k$ defining the start of the iteration and $\mathcal{F}_{k+\frac{1}{2}}$ defining the algebra after the minibatch has been sampled and $g_k$ computed. This filtration will be implicit in the statements of the convergence results.

We begin with a standard assumption \rev{that gives probability bounds on  the accuracy of the linear model defined by $\{G_k\}$\rev{, with $G_k(\omega)=\nabla F(X_k,\omega))$} and the estimates $\{F^0_k, F^s_k\}$ \rev{defined similarly}. To this end define $\theta,\beta$ to be the probabilities that $\{G_k\}$ is  $\kappa$-$\delta_k$-accurate, and $\{F^0_k, F^s_k\}$ are $\varepsilon_f$-accurate, respectively}.
\begin{assumption}\label{as:probacc}
    Given $\theta,\beta\in(0,1)$ and \rev{$\varepsilon_f>0$}, there exist $\kappa_g,\kappa_f$ such that the sequence of  \rev{ gradient estimates $\{G_k\}$ and function estimates $\{F^0_k, F^s_k\}$ generated by Algorithm \ref{alg:iht}   are, respectively, 
    $\kappa$-$\delta_k$-accurate with probability $\theta$ as per Definition~\ref{def:modelacc}, and $\varepsilon_f$-accurate with probability $\beta$ as per Definition~\ref{def:facc}.} 
\end{assumption}

One can expect that \rev{when the estimates are sufficiently close to their deterministic counterparts, the classical sparse optimization theory, namely} \cite[Lemma 3.1]{beck2013sparsity} provides for the guarantee of function decrease. Indeed, one can derive the following lemma which also functionally corresponds to 
\cite[Lemma 4.5]{chen2018stochastic}.

\begin{lemma}\label{lem:gooddec0}
If the model for generating the iterate $k$ is $\kappa$-$\delta_k$ accurate according to Definition~\ref{def:modelacc},
with $\hat x_k$ and $\delta_k$ being such that 
\begin{equation}\label{cond_delta_a}
\delta_k \le \frac1{2\alpha \kappa_{\rev{f}}\delta_{max}} \|x_k-\hat x_k\|,
\end{equation}
then 
\begin{equation}\label{eq:gooddec0}
f(x_k)-f(\hat{x}_{k}) \ge \frac1{2\alpha} \|\hat x_k-x_k\|^2.
\end{equation}
\end{lemma}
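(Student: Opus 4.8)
The plan is to combine the linear descent inequality already derived in~\eqref{eq:desc} with the model accuracy bound~\eqref{eq:acc} (equivalently, its restriction to $I_k$ given by Proposition~\ref{prop:accik}), so that the progress made on the linear model $h_k$ translates into progress on the true objective $f$. First I would note that, since $\hat x_k = P_{I_k}(x_k - \alpha\min\{1,\delta_k/(\alpha\|g_k\|)\}g_k)$, the point $\hat x_k$ differs from $x_k$ only on coordinates in $I_k$, and the clipping ensures $\|\hat x_k - x_k\| \le \delta_k$; hence $[\hat x_k]_{I_k} \in B([x_k]_{I_k},\delta_k)$, so $\hat x_k$ is an admissible choice of $y$ in~\eqref{eq:acc}. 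Therefore the model accuracy inequality applies at $y=\hat x_k$:
\[
f(\hat x_k) \le f(x_k) + g_k^T(\hat x_k - x_k) + \kappa_f \|\hat x_k - x_k\|\,\delta_k^2.
\]

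Next I would substitute the descent estimate~\eqref{eq:desc}, namely $g_k^T(\hat x_k - x_k) \le -\tfrac1\alpha\|\hat x_k - x_k\|^2$, to obtain
\[
f(\hat x_k) \le f(x_k) - \frac1\alpha \|\hat x_k - x_k\|^2 + \kappa_f \|\hat x_k - x_k\|\,\delta_k^2,
\]
i.e.
\[
f(x_k) - f(\hat x_k) \ge \frac1\alpha \|\hat x_k - x_k\|^2 - \kappa_f \|\hat x_k - x_k\|\,\delta_k^2 = \frac1\alpha\|\hat x_k - x_k\|\left(\|\hat x_k - x_k\| - \alpha\kappa_f\delta_k^2\right).
\]
The remaining task is to show the error term is dominated by half of the main term, i.e. that $\kappa_f\|\hat x_k - x_k\|\delta_k^2 \le \tfrac1{2\alpha}\|\hat x_k - x_k\|^2$, which rearranges to $\delta_k^2 \le \tfrac1{2\alpha\kappa_f}\|\hat x_k - x_k\|$. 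This is exactly where the hypothesis~\eqref{cond_delta_a} is used: since $\delta_k \le \delta_{max}$, we have $\delta_k^2 \le \delta_k\delta_{max} \le \tfrac1{2\alpha\kappa_f}\|x_k - \hat x_k\|$, using $\delta_k \le \tfrac1{2\alpha\kappa_f\delta_{max}}\|x_k - \hat x_k\|$. Plugging this back in yields $f(x_k) - f(\hat x_k) \ge \tfrac1\alpha\|\hat x_k - x_k\|^2 - \tfrac1{2\alpha}\|\hat x_k - x_k\|^2 = \tfrac1{2\alpha}\|\hat x_k - x_k\|^2$, which is~\eqref{eq:gooddec0}.

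The main obstacle, and the step to be careful about, is verifying that $\hat x_k$ legitimately lies in the accuracy neighborhood where~\eqref{eq:acc} holds — in particular that the clipped step has norm at most $\delta_k$ and that its support is contained in $I_k$, so that the one-sided ball condition $[\hat x_k]_{I_k} \in B([x_k]_{I_k},\delta_k)$ is met. Once this is granted, the rest is the routine algebra sketched above, with~\eqref{cond_delta_a} precisely calibrated so that the second-order-looking error term $\kappa_f\|\hat x_k - x_k\|\delta_k^2$ absorbs into the quadratic descent term; using $\delta_k^2 \le \delta_k\delta_{max}$ to convert the squared step parameter into a single power is the small trick that makes the bound in~\eqref{cond_delta_a} have the form it does. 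One should also double-check whether the closed-form~\eqref{x_hataltin} together with Proposition~\ref{prop:accik} could be used to sharpen or simplify the argument by working entirely with $I_k$-restricted quantities; either route gives the claimed inequality.
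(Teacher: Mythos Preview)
Your argument is correct and mirrors the paper's proof exactly: apply the second inequality in~\eqref{eq:acc} at $y=\hat x_k$, combine with the descent estimate~\eqref{eq:desc}, and then use $\delta_k\le\delta_{\max}$ together with~\eqref{cond_delta_a} to absorb the $\kappa_f\|\hat x_k-x_k\|\delta_k^2$ term into half of the quadratic. One minor inaccuracy in your verification step: it is not generally true that $\hat x_k$ and $x_k$ agree off $I_k$ (the support of $x_k$ need not lie inside $I_k$, so $\|\hat x_k-x_k\|\le\delta_k$ can fail), but this is harmless---Definition~\ref{def:modelacc} only requires $\|[\hat x_k-x_k]_{I_k}\|\le\delta_k$, which follows directly from~\eqref{x_hataltin} and the clipping, and the paper itself does not even pause to check this.
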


\begin{proof}
Using the definition of $h_k$ given in~\eqref{eq:htil}, we can write
\begin{equation*}
\begin{split}
f(\hat{x}_k)-f(x_k) & = f(\hat{x}_k)-h_k(\hat{x}_k)+h_k(\hat{x}_k)-h_k(x_k)+h_k(x_k)-f(x_k) \\
& = f(\hat{x}_k)-h_k(\hat{x}_k)+h_k(\hat{x}_k)-h_k(x_k) \\
& = f(\hat{x}_k)-f(x_k)-g_k^T(\hat{x}_k-x_k) + g_k^T(\hat{x}_k-x_k) \\
& \le \kappa_{\rev{f}} \|x_k-\hat{x}_{k}\|\delta_k^2+g_k^T(\hat{x}_k-x_k),
\end{split}
\end{equation*}
where the inequality follows from the second condition in~\eqref{eq:acc}.
Using~\eqref{eq:desc}, we also have that
$$
g_k^T(\hat{x}_k-x_k) \le - \frac 1{\alpha}\|\hat{x}_k-x_k\|^2.
$$
Then, we obtain
\begin{equation}\label{gooddec_chain}
f(\hat{x}_k)-f(x_k) \le \kappa_{\rev{f}} \|x_k-\hat{x}_{k}\|\delta_k^2 - \frac 1{\alpha}\|\hat{x}_k-x_k\|^2 \le \kappa_{\rev{f}}\delta_{max} \|x_k-\hat{x}_{k}\|\delta_k - \frac 1{\alpha}\|\hat{x}_k-x_k\|^2,
\end{equation}
where the last inequality follows from the fact that $\delta_k \le \delta_{max}$.
Moreover, \eqref{cond_delta_a} implies that
\[
\kappa_{\rev{f}}\delta_{max} \|x_k-\hat{x}_{k}\|\delta_k \le \frac1{2\alpha} \|\hat x_k-x_k\|^2.
\]
Using this inequality in~\eqref{gooddec_chain},
the desired result follows.
\end{proof}

Now, taking inspiration from \cite[Lemma 4.6]{chen2018stochastic}, we can bound the decrease with respect to the projected real gradient.

\begin{lemma}\label{lem:gooddec2}
If the model for generating the iterate $k$ is $\kappa$-$\delta_k$ accurate according to Definition~\ref{def:modelacc} and
\begin{equation}\label{cond_delta_b}
\delta_k \le a\Biggl\|x_k-P_{I_k}\biggl(x_k-\alpha\min\left\{1,\frac{\delta_k}{\alpha\|\nabla f(x_k)\|}\right\}\nabla f(x_k)\biggr)\Biggr\|,
\end{equation}
where
%\[
%I_k = I_{\mathcal{I}} \left(P_{C_K}\biggl(x_k-\alpha\min\left\{1,\dfrac{\delta_k}{\alpha\|g_k\|}\right\}g_k\biggr)\right)
%\]
%and
\begin{equation}\label{param_a}
a = \frac 1{2\alpha \kappa_{\rev{f}}\delta_{max}+2\sqrt{K}}
\end{equation}
and
\begin{equation}\label{param_alpha}
\alpha > \dfrac{\sqrt K}{\kappa_{\rev{f}}\delta_{max}},
\end{equation}
then 
\begin{equation}\label{eq:gooddec2}
f(x_k)-f(\hat{x}_{k}) \ge c \Biggl\|x_k-P_{I_k}\biggl(x_k-\alpha\min\left\{1,\frac{\delta_k}{\alpha\|\nabla f(x_k)\|}\right\}\nabla f(x_k)\biggr)\Biggr\|^2,
\end{equation}
with
\[
c = \frac{1-4a\sqrt{K}}{2\alpha} > 0.
\]
\end{lemma}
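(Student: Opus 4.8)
The plan is to reduce the claim to Lemma~\ref{lem:gooddec0}. Write
$\tilde{x}_k := P_{I_k}\bigl(x_k-\alpha\min\{1,\frac{\delta_k}{\alpha\|\nabla f(x_k)\|}\}\nabla f(x_k)\bigr)$,
so that the hypothesis reads $\delta_k\le a\|x_k-\tilde{x}_k\|$ and the conclusion reads $f(x_k)-f(\hat{x}_k)\ge c\|x_k-\tilde{x}_k\|^2$. Let $p_k:=\alpha\min\{1,\frac{\delta_k}{\alpha\|g_k\|}\}g_k$ and $q_k:=\alpha\min\{1,\frac{\delta_k}{\alpha\|\nabla f(x_k)\|}\}\nabla f(x_k)$ be the clipped steps built from $g_k$ and from $\nabla f(x_k)$; by the very definition of the clipping $\|p_k\|\le\delta_k$ and $\|q_k\|\le\delta_k$, while $\hat{x}_k=P_{I_k}(x_k-p_k)$ and $\tilde{x}_k=P_{I_k}(x_k-q_k)$ use the \emph{same} coordinate projector $P_{I_k}$.

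The first step is the estimate $\|\hat{x}_k-\tilde{x}_k\|\le 2\sqrt{K}\,\delta_k$. By linearity of $P_{I_k}$ we have $\hat{x}_k-\tilde{x}_k=P_{I_k}(q_k-p_k)$, which is supported on $I_k$; each of its components has absolute value at most $\|q_k\|+\|p_k\|\le 2\delta_k$, and since $|I_k|=K$ this gives $\|\hat{x}_k-\tilde{x}_k\|\le\sqrt{K}\cdot 2\delta_k$. Combined with the triangle inequality and the hypothesis $\delta_k\le a\|x_k-\tilde{x}_k\|$ this yields
\[
\|x_k-\hat{x}_k\|\ \ge\ \|x_k-\tilde{x}_k\|-\|\hat{x}_k-\tilde{x}_k\|\ \ge\ (1-2\sqrt{K}\,a)\,\|x_k-\tilde{x}_k\|,
\]
where $1-2\sqrt{K}\,a=1-\frac{2\sqrt{K}}{2\alpha\kappa_f\delta_{max}+2\sqrt{K}}\in(0,1)$.

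The second step is to check the hypothesis~\eqref{cond_delta_a} of Lemma~\ref{lem:gooddec0}, i.e.\ $\delta_k\le\frac{1}{2\alpha\kappa_f\delta_{max}}\|x_k-\hat{x}_k\|$. Using $\delta_k\le a\|x_k-\tilde{x}_k\|$ together with the lower bound on $\|x_k-\hat{x}_k\|$ just obtained, it suffices that $a\le\frac{1-2\sqrt{K}\,a}{2\alpha\kappa_f\delta_{max}}$, equivalently $a\,(2\alpha\kappa_f\delta_{max}+2\sqrt{K})\le 1$, which is exactly the definition~\eqref{param_a} of $a$ (with equality). Lemma~\ref{lem:gooddec0} then gives $f(x_k)-f(\hat{x}_k)\ge\frac{1}{2\alpha}\|x_k-\hat{x}_k\|^2$, and substituting the lower bound on $\|x_k-\hat{x}_k\|$ and using $(1-2\sqrt{K}\,a)^2\ge 1-4\sqrt{K}\,a$ yields
\[
f(x_k)-f(\hat{x}_k)\ \ge\ \frac{(1-2\sqrt{K}\,a)^2}{2\alpha}\,\|x_k-\tilde{x}_k\|^2\ \ge\ \frac{1-4\sqrt{K}\,a}{2\alpha}\,\|x_k-\tilde{x}_k\|^2\ =\ c\,\|x_k-\tilde{x}_k\|^2,
\]
while $c>0$ is equivalent to $1-4\sqrt{K}\,a>0$, which rearranges precisely to $\alpha>\sqrt{K}/(\kappa_f\delta_{max})$, i.e.\ to~\eqref{param_alpha}.

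The one point that requires a little care is the bound $\|\hat{x}_k-\tilde{x}_k\|\le 2\sqrt{K}\,\delta_k$: one has to observe that the off-support coordinates of $\hat{x}_k$ and $\tilde{x}_k$ cancel in the difference (both are set to zero against the same $x_k$), so that the discrepancy is entirely the difference of the two clipped steps restricted to $I_k$, and that its size is controlled by the \emph{clipping} — through $\|p_k\|,\|q_k\|\le\delta_k$ — rather than by the model accuracy. Everything else is the triangle inequality, a single invocation of Lemma~\ref{lem:gooddec0} (which absorbs the accuracy hypothesis), and the algebraic identity $a\,(2\alpha\kappa_f\delta_{max}+2\sqrt{K})=1$; note in particular that $\|x_k-\tilde{x}_k\|>0$ automatically, since $\delta_k>0$ forces the right-hand side of the hypothesis to be positive.
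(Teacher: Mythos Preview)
Your proof is correct and follows essentially the same route as the paper: bound $\|\hat{x}_k-\tilde{x}_k\|\le 2\sqrt{K}\,\delta_k$ via the clipping, use the triangle inequality and the definition of $a$ to verify the hypothesis of Lemma~\ref{lem:gooddec0}, apply that lemma, and then substitute back the lower bound on $\|x_k-\hat{x}_k\|$. The only cosmetic difference is that the paper expands $(\|x_k-\tilde{x}_k\|-2\sqrt{K}\delta_k)^2$ and drops the positive $4K\delta_k^2$ term directly, whereas you use the equivalent inequality $(1-2\sqrt{K}a)^2\ge 1-4\sqrt{K}a$.
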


\begin{proof}
We can write
\begin{equation*}
\begin{split}
& \Biggl\|x_k-P_{I_k}\biggl(x_k-\alpha\min\left\{1,\frac{\delta_k}{\alpha\|\nabla f(x_k)\|}\right\}\nabla f(x_k)\biggr)\Biggr\| \le \\
& \quad \|x_k - \hat x_k\| + \Biggl\|\hat x_k-P_{I_k}\biggl(x_k-\alpha\min\left\{1,\frac{\delta_k}{\alpha\|\nabla f(x_k)\|}\right\}\nabla f(x_k)\biggr)\Biggr\|.
\end{split}
\end{equation*}
Using~\eqref{xk_proj}, we get
\begin{equation}\label{x_hatx}
\begin{split}
& \Biggl\|x_k-P_{I_k}\biggl(x_k-\alpha\min\left\{1,\frac{\delta_k}{\alpha\|\nabla f(x_k)\|}\right\}\nabla f(x_k)\biggr)\Biggr\| = \\
& \quad \|x_k - \hat x_k\| +
\Biggl\|\alpha\min\left\{1,\frac{\delta_k}{\alpha\|g_k\|}\right\}[g_k]_{I_k}-\alpha\min\left\{1,\frac{\delta_k}{\alpha\|\nabla f(x_k)\|}\right\}[\nabla f(x_k)]_{I_k}\Biggr\| = \\
& \quad \|x_k - \hat x_k\| + \delta_k \Biggl\|\min\left\{\frac{\alpha\|g_k\|}{\delta_k}, 1\right\} \frac{[g_k]_{I_k}}{\|g_k\|} -\min\left\{\frac{\alpha\|\nabla f(x_k)\|}{\delta_k}, 1\right\} \frac{[\nabla f(x_k)]_{I_k}}{\|\nabla f(x_k)\|}\Biggr\| \le \\
& \quad \|x_k - \hat x_k\| + 2\sqrt{K}\delta_k,
\end{split}
\end{equation}
where the last inequality follows from the fact that $\|u-v\| \le \sqrt{K}\|u-v\|_\infty\leq 2\sqrt{K}$ for all $u,v \in \mathbb R^K$ such that \rev{$\|u\| \leq 1$ and $\|v\| \leq 1$}.
From~\eqref{cond_delta_b}, the first term in~\eqref{x_hatx} is greater \rev{or} equal \rev{than} $\delta_k/a$, leading to
\[
\frac{\delta_k}a \le \|x_k - \hat x_k\| + 2\sqrt{K}\delta_k.
\]
Using the definition of $a$ given in~\eqref{param_a}, it follows that~\eqref{cond_delta_a} is satisfied and we can apply Lemma~\ref{lem:gooddec0}, obtaining
\begin{equation}\label{f_decr}
f(x_k)-f(\hat{x}_{k}) \ge \frac1{2\alpha} \|\hat x_k-x_k\|^2.
\end{equation}
Finally, in order to lower bound the right-hand side term in the above inequality, using~\eqref{x_hatx} we can write
\begin{equation*}
\begin{split}
\|x_k-\hat{x}_{k}\|^2 \ge &\Biggl(\Biggl \|x_k-P_{I_k}\biggl(x_k-\alpha\min\left\{1,\frac{\delta_k}{\alpha\|\nabla f(x_k)\|}\right\}\nabla f(x_k)\biggr)\Biggr\|- 2\sqrt{K}\delta_k\Biggr)^2 \\
& \ge \Biggl\|x_k-P_{I_k}\biggl(x_k-\alpha\min\left\{1,\frac{\delta_k}{\alpha\|\nabla f(x_k)\|}\right\}\nabla f(x_k)\biggr)\Biggr\|^2 + \\
&\quad -4\sqrt{K}\delta_k\Biggl\|x_k-P_{I_k}\biggl(x_k-\alpha\min\left\{1,\frac{\delta_k}{\alpha\|\nabla f(x_k)\|}\right\}\nabla f(x_k)\biggr)\Biggr\| \\
& \ge \left(1 - 4a\sqrt{K}\right) \Biggl\|x_k-P_{I_k}\biggl(x_k-\alpha\min\left\{1,\frac{\delta_k}{\alpha\|\nabla f(x_k)\|}\right\}\nabla f(x_k)\biggr)\Biggr\|^2,
\end{split}
\end{equation*}
where the last inequality follows from~\eqref{cond_delta_b}.
From~\eqref{param_alpha}, it also follows that $c>0$,
thus leading to the desired result.
\end{proof}

The next lemma states conditions on $\delta_k$ to guarantee that an iteration is successful, similarly as in
\cite[Lemma 4.7]{chen2018stochastic}.

\begin{lemma}\label{lem:stepaccept}
If, at iteration $k$, the estimates $f^0_k,f^s_k$ are $\varepsilon_f$-accurate according to Definition~\ref{def:facc} and the model is $\kappa$-$\delta_k$ accurate according to Definition~\ref{def:modelacc}, with
\[
\delta_k \le \min\Biggl\{\frac 1{\eta_2}, \frac{1-\eta_1}{2 \varepsilon_f+\kappa_{\rev{f}} \delta_{max}}\Biggr\}\|[g_k]_{I_k}\|,
\]
then the step is accepted.
\end{lemma}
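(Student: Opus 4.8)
The plan is to verify directly that both inequalities in the test that defines a successful iteration in Algorithm~\ref{alg:iht} are met under the stated bound on $\delta_k$, so that $\hat x_k$ is accepted. The condition $\|[g_k]_{I_k}\|\ge\eta_2\delta_k$ is immediate, being exactly the hypothesis $\delta_k\le\tfrac1{\eta_2}\|[g_k]_{I_k}\|$ rearranged; so all the work is in the ratio test $f^0_k-f^s_k\ge\eta_1\|[g_k]_{I_k}\|\delta_k$.

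For that, I would first pass from the stochastic estimates to the true objective. By $\varepsilon_f$-accuracy (Definition~\ref{def:facc}) at the current $\delta_k$, $f^0_k\ge f(x_k)-\varepsilon_f\delta_k^2$ and $f^s_k\le f(x_k+s_k)+\varepsilon_f\delta_k^2=f(\hat x_k)+\varepsilon_f\delta_k^2$, hence $f^0_k-f^s_k\ge\bigl(f(x_k)-f(\hat x_k)\bigr)-2\varepsilon_f\delta_k^2$. Next I lower-bound $f(x_k)-f(\hat x_k)$ via model accuracy. The clipping gives $\|[\hat x_k-x_k]_{I_k}\|=\alpha\min\{1,\tfrac{\delta_k}{\alpha\|g_k\|}\}\|[g_k]_{I_k}\|\le\min\{\alpha\|g_k\|,\delta_k\}\le\delta_k$, so $y=\hat x_k$ lies in the ball of Definition~\ref{def:modelacc}, and applying the second model-accuracy inequality at $y=\hat x_k$ yields
\[
f(x_k)-f(\hat x_k)\ \ge\ -g_k^T(\hat x_k-x_k)-\kappa_f\|\hat x_k-x_k\|\delta_k^2 .
\]
Combining the projection inequality recorded in~\eqref{eq:desc} with Cauchy--Schwarz gives $\|\hat x_k-x_k\|\le\alpha\min\{1,\tfrac{\delta_k}{\alpha\|g_k\|}\}\|g_k\|\le\delta_k$, and with $\delta_k\le\delta_{max}$ this bounds the error term by $\kappa_f\delta_{max}\delta_k^2$; hence
\[
f^0_k-f^s_k\ \ge\ -g_k^T(\hat x_k-x_k)-(2\varepsilon_f+\kappa_f\delta_{max})\delta_k^2 .
\]

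To close, the hypothesis $\delta_k\le\tfrac{1-\eta_1}{2\varepsilon_f+\kappa_f\delta_{max}}\|[g_k]_{I_k}\|$ makes $(2\varepsilon_f+\kappa_f\delta_{max})\delta_k^2\le(1-\eta_1)\|[g_k]_{I_k}\|\delta_k$, so it only remains to show that the model decrease satisfies $-g_k^T(\hat x_k-x_k)\ge\|[g_k]_{I_k}\|\delta_k$. Granting this, $f^0_k-f^s_k\ge\|[g_k]_{I_k}\|\delta_k-(1-\eta_1)\|[g_k]_{I_k}\|\delta_k=\eta_1\|[g_k]_{I_k}\|\delta_k$, and dividing by $\|[g_k]_{I_k}\|\delta_k>0$ finishes the proof.

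The first two pieces above are essentially bookkeeping with the accuracy definitions and with the projection inequality from~\eqref{eq:desc}; the step I expect to be the main obstacle is the geometric estimate $-g_k^T(\hat x_k-x_k)\ge\|[g_k]_{I_k}\|\delta_k$ for the clipped, $P_{I_k}$-projected step. Here I would start from the identity $-g_k^T(\hat x_k-x_k)=t_k\|[g_k]_{I_k}\|^2+[g_k]_{[1:n]\setminus I_k}^T[x_k]_{[1:n]\setminus I_k}$ with $t_k=\alpha\min\{1,\tfrac{\delta_k}{\alpha\|g_k\|}\}$, observe that the projection inequality forces the second, ``support-swap'' term to be nonnegative (indeed $\ge\tfrac1{t_k}\|[x_k]_{[1:n]\setminus I_k}\|^2$), and split into the two clipping regimes $\alpha\|g_k\|\le\delta_k$ and $\alpha\|g_k\|>\delta_k$. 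The delicate point is that in each regime one must certify that the $I_k$-subspace component of the step is long enough, essentially $t_k\|[g_k]_{I_k}\|\ge\delta_k$; this is where the clipping structure, the second acceptance condition $\|[g_k]_{I_k}\|\ge\eta_2\delta_k$, and the standing conditions on the parameters $\alpha$ and $\eta_2$ (as already imposed for Lemma~\ref{lem:gooddec2}) must be combined, and it does not come for free.
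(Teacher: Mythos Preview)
Your reduction is correct up to and including the display
\[
f^0_k-f^s_k\ \ge\ -g_k^T(\hat x_k-x_k)-(2\varepsilon_f+\kappa_f\delta_{\max})\delta_k^2,
\]
but the ``geometric estimate'' you isolate as the remaining obstacle, $-g_k^T(\hat x_k-x_k)\ge\|[g_k]_{I_k}\|\delta_k$, is the wrong target: it is false in general and cannot be rescued by the standing parameter conditions. Take $x_k\in C_K$ with $I_{\mathcal I}(x_k)\subseteq I_k$ (so the support-swap term vanishes, $[x_k]_{[1:n]\setminus I_k}=0$) and assume the clip is active, $t_k=\delta_k/\|g_k\|$. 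Your own identity then gives
\[
-g_k^T(\hat x_k-x_k)=t_k\|[g_k]_{I_k}\|^2=\frac{\delta_k\,\|[g_k]_{I_k}\|^2}{\|g_k\|},
\]
which is strictly smaller than $\|[g_k]_{I_k}\|\delta_k$ whenever $g_k$ has any nonzero component outside $I_k$ (so $\|g_k\|>\|[g_k]_{I_k}\|$). Nothing in the hypotheses or in~\eqref{param_alpha},~\eqref{eq:thmetaacc} forces $g_k$ to be supported on $I_k$, so the route through the full-space quantity $-g_k^T(\hat x_k-x_k)$ and the projection inequality~\eqref{eq:desc} cannot close.

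The paper's argument avoids this by never leaving the $I_k$ block. Instead of the full model inequality from Definition~\ref{def:modelacc}, it invokes the restricted version in Proposition~\ref{prop:accik}, which compares $f(x_k)-f(\hat x_k)$ with $[g_k]_{I_k}^T[\hat x_k-x_k]_{I_k}$ up to an error $\kappa_f\|[\hat x_k-x_k]_{I_k}\|\delta_k^2\le\kappa_f\delta_{\max}\delta_k^2$; and it does not try to establish the one-sided lower bound you aim for, but rather bounds $|\rho_k-1|$ directly, using Cauchy--Schwarz on the block,
\[
\bigl|[g_k]_{I_k}^T[\hat x_k-x_k]_{I_k}\bigr|\le\|[g_k]_{I_k}\|\,\|[\hat x_k-x_k]_{I_k}\|\le\|[g_k]_{I_k}\|\delta_k,
\]
together with $\varepsilon_f$-accuracy to obtain $|\rho_k-1|\le(2\varepsilon_f+\kappa_f\delta_{\max})\delta_k/\|[g_k]_{I_k}\|\le 1-\eta_1$, hence $\rho_k\ge\eta_1$. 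The structural point is that the denominator $\|[g_k]_{I_k}\|\delta_k$ in the acceptance test is tailored to the $I_k$ block, so reintroducing the off-block components of $g_k$ via~\eqref{eq:desc} is both unnecessary and, as the counterexample shows, fatal.
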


\begin{proof}
Define
\[
\rho_k = \frac{f^0_k-f^s_k}{\|[g_k]_{I_k}\|\delta_k}.
\]
Using~\eqref{def:accestf} and~\rev{\eqref{eq:accik}}, we can write 
\rev{
\begin{equation*}
\begin{split}
\rho_k & = \frac{f^0_k-f(x_k)}{\|[g_k]_{I_k}\|\delta_k} + \frac{f(x_k)-f(\hat x_k)}{\|[g_k]_{I_k}\|\delta_k} + \frac{f(\hat x_k)-f^s_k}{\|[g_k]_{I_k}\|\delta_k} \\
& \le \frac{2 \varepsilon_f \delta_k}{\|[g_k]_{I_k}\|} + \frac{\vert [g_k]_{I_k}^T [\hat x_k-x_k]_{I_k}\vert  + \kappa_{\rev{f}} \|[\hat x_k-x_k]_{I_k}\|\delta_k^2}{\|[g_k]_{I_k}\|\delta_k} \\
& \le \frac{2 \varepsilon_f \delta_k}{\|[g_k]_{I_k}\|} + 1 + \frac{\kappa_{\rev{f}} \delta_{max}\delta_k}{\|[g_k]_{I_k}\|},
\end{split}
\end{equation*}}
where the last inequality follows from the fact that \rev{$\vert [g_k]_{I_k}^T [\hat x_k-x_k]_{I_k}\vert \le \|[g_k]_{I_k}\|\|x_k-x_k]_{I_k}\|$ and $\|[\hat x_k-x_k]_{I_k}\| \le \delta_k \le \delta_{max}$}.
Then
\[
|\rho_k-1| \le \frac{(2 \varepsilon_f+\kappa_{\rev{f}} \delta_{max}) \delta_k}{\|[g_k]_{I_k}\|} \le 1 - \eta_1,
\]
where we have used the assumption on $\delta_k$ in the last inequality. Hence, $\rho_k \ge \eta_1$. Since we have also assumed that $\|[g_k]_{I_k}\| \ge \eta_2 \delta_k$, from the instructions of the algorithm (see line~8 of Algorithm~\ref{alg:iht}) it follows that the step is accepted.
\end{proof}

\begin{lemma}\label{lem:stepaccept}
If the estimates $f^0_k,f^s_k$ at iteration $k$ are $\varepsilon_f$-accurate according to Definition~\ref{def:facc}
with $\epsilon_f < (\eta_1 \eta_2)/2$ and the step is accepted, then
\[
f(x_{k+1})-f(x_k) \le - C\|\delta_k\|^2,
\]
with $C = \eta_1 \eta_2 - 2\epsilon_f > 0$.
\end{lemma}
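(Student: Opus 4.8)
The plan is to decompose the true function change $f(x_{k+1})-f(x_k)$ through the computed estimates, then use the acceptance test of line~8 of Algorithm~\ref{alg:iht} to lower‑bound the \emph{estimated} decrease and the $\varepsilon_f$‑accuracy of Definition~\ref{def:facc} to control the three gaps between estimates and true values. Note first that on an accepted (successful) iteration we have $x_{k+1}=\hat x_k=x_k+s_k$, so $f(x_{k+1})-f(x_k)=f(\hat x_k)-f(x_k)$, and I would write the telescoping identity
\[
f(\hat x_k)-f(x_k)=\bigl(f(\hat x_k)-f^s_k\bigr)+\bigl(f^s_k-f^0_k\bigr)+\bigl(f^0_k-f(x_k)\bigr).
\]

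Next, from the acceptance conditions $\rho_k\ge\eta_1$ and $\|[g_k]_{I_k}\|\ge\eta_2\delta_k$ (with $\rho_k=(f^0_k-f^s_k)/(\|[g_k]_{I_k}\|\delta_k)$ as in the previous lemma's proof), I would deduce
\[
f^0_k-f^s_k\ \ge\ \eta_1\,\|[g_k]_{I_k}\|\,\delta_k\ \ge\ \eta_1\eta_2\,\delta_k^2,
\]
i.e. $f^s_k-f^0_k\le-\eta_1\eta_2\delta_k^2$. Since the estimates are $\varepsilon_f$‑accurate, Definition~\ref{def:facc} also gives $f(\hat x_k)-f^s_k\le\varepsilon_f\delta_k^2$ and $f^0_k-f(x_k)\le\varepsilon_f\delta_k^2$. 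Substituting these three bounds into the identity yields
\[
f(x_{k+1})-f(x_k)\ \le\ \varepsilon_f\delta_k^2-\eta_1\eta_2\delta_k^2+\varepsilon_f\delta_k^2\ =\ -(\eta_1\eta_2-2\varepsilon_f)\,\delta_k^2\ =\ -C\delta_k^2,
\]
and the hypothesis $\varepsilon_f<\eta_1\eta_2/2$ guarantees $C=\eta_1\eta_2-2\varepsilon_f>0$.

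There is no genuine obstacle here: the argument is a direct chaining of the acceptance inequality with the two accuracy inequalities, and it mirrors the standard sufficient‑decrease lemma in the probabilistic‑model literature (cf.\ \cite{chen2018stochastic}). The only points requiring care are (i) translating the ratio test into the quadratic lower bound $\eta_1\eta_2\delta_k^2$ by combining $\rho_k\ge\eta_1$ with $\|[g_k]_{I_k}\|\ge\eta_2\delta_k$, and (ii) observing that $\delta_k$ is a scalar, so $\|\delta_k\|^2=\delta_k^2$ in the statement.
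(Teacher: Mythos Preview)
Your proof is correct and follows essentially the same approach as the paper: both use the telescoping decomposition $f(x_k+s_k)-f(x_k)=(f(x_k+s_k)-f_k^s)+(f_k^s-f_k^0)+(f_k^0-f(x_k))$, bound the middle term via the acceptance test $f_k^0-f_k^s\ge\eta_1\|[g_k]_{I_k}\|\delta_k\ge\eta_1\eta_2\delta_k^2$, and bound the two outer terms by $\varepsilon_f\delta_k^2$ using Definition~\ref{def:facc}. Your additional remarks on $x_{k+1}=\hat x_k$ and $\|\delta_k\|^2=\delta_k^2$ are accurate clarifications.
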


\begin{proof}
Since the step is accepted, from the instructions of the algorithm (see line~8 of Algorithm~\ref{alg:iht}) we can write
\begin{equation}\label{f_decr_step_accepted}
f_k^0 - f_k^s  \ge \eta_1 \|[g_k]_{I_k}\| \delta_k \ge \eta_1 \eta_2 \delta_k^2.
\end{equation}
Moreover,
\[
f(x_k+s_k) - f(x_k) = f(x_k+s_k) - f_k^s + f_k^s - f_k^0 + f_k^0 - f(x_k) \le 2 \epsilon_f \delta_k^2 - \eta_1 \eta_2 \delta_k^2,
\]
where the inequality follows from~\eqref{def:accestf} and~\eqref{f_decr_step_accepted}.
Then, using the definition of $C$ given in the assertion, the desired result follows.
\end{proof}
Now we define the stochastic process
\begin{equation}\label{eq:stochproc}
    \Phi_k := \nu f(x_k)+(1-\nu)\delta_k^2. 
\end{equation}

The next \rev{theorem} is along the lines of Theorem 4.11 in \cite{chen2018stochastic}. The result requires a compactness assumption, which we present first.
\begin{assumption}\label{as:cont}
    
    \rev{Given $\delta_{\max}$ and some initial guess $x_0$, let $\mathcal{L}(x_0,\delta_{\max})$ be the set containing all the iterates generated by the algorithm, noting that this depends on the stochastic realization of the iterates and gradient estimates. Furthermore, let 
    \[
    \bar{\mathcal{L}}(x_0,\delta_{\max}):=\bigcup\limits_{x\in \mathcal{L}(x_0,\delta_{\max})} B(x,\delta_{\max})
    \]
    be the union of $\delta_{\max}$ radius balls around all of the iterates.}
    
    \rev{Assume that, for all realizations considered in the theoretical analysis, the following holds:
    \begin{itemize}
        \item $f$ is bounded on $\mathcal{L}(x_0,\delta_{\max})$,
        \item $f$ and $\nabla f$ are both $L$-Lipschitz continuous on $\bar{\mathcal{L}}(x_0,\delta_{\max})$.
    \end{itemize}}
\end{assumption}

\begin{theorem}
\label{thm:4.11_equivalent}
Let $\{x_k\}$ be the sequence of iterates generated by the Probabilistic Iterative Hard Thresholding Algorithm (Algorithm~\ref{alg:iht}) under Assumption~\ref{as:probacc}, and moreover assume that the function and iterates are such that Assumption~\ref{as:cont} holds. 
Also assume that the step acceptance parameter $\eta_2$ satisfies
\begin{equation}\label{eq:thmetaacc}
\eta_2 \ge 3\kappa_f\alpha  
\end{equation}
and the function accuracy parameter $\varepsilon_f$ is chosen such that,
\begin{equation}\label{eq:thmvarf}
\varepsilon_f \le  \min\left\{\kappa_f,\eta_1\eta_2\right\}.
\end{equation}

Then, \rev{if $\theta$ and $\beta$ are sufficiently large,} it holds that the sequence of trust region bounds $\{\delta_k\}$
satisfies the summability condition
\begin{equation} \label{eq:conclusion_summability}
\sum_{k=0}^{\infty} \delta_k^2 < \infty
\end{equation}
almost surely.
\end{theorem}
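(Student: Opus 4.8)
The plan is to adapt the probabilistic trust-region argument of~\cite{chen2018stochastic}: for a suitable $\nu\in(0,1)$, show that the process $\Phi_k$ defined in~\eqref{eq:stochproc} is bounded and obeys a supermartingale-type inequality $\mathbb{E}[\Phi_{k+1}-\Phi_k\mid\mathcal{F}_{k-1}]\le-\rho\,\delta_k^2$ for some $\rho>0$, and then read off~\eqref{eq:conclusion_summability} by telescoping (equivalently, apply the Robbins--Siegmund lemma to the bounded nonnegative supermartingale $\Phi_k-\Phi_{\min}$).

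First I would fix the deterministic ingredients. The updates of Algorithm~\ref{alg:iht} keep $\delta_k\le\delta_{max}$, so by Assumption~\ref{as:cont} ($f$ bounded on the iterate set) $\Phi_k$ takes values in a bounded interval $[\Phi_{\min},\Phi_{\max}]$ on the realizations considered. Next, the a priori step bound $\|\hat x_k-x_k\|\le\sqrt{1+4K}\,\delta_k$: the clipping gives $\|[\hat x_k-x_k]_{I_k}\|\le\delta_k$, and each coordinate of $x_k$ that $\mathbf{PHT}^k$ zeroes has magnitude $\le2\delta_k$, since some coordinate of the clipped step, of magnitude $\le\delta_k$ and outside the support of $x_k$, was ranked above it by $\sigma_k$; so the zeroed coordinates add at most $4K\delta_k^2$ to $\|\hat x_k-x_k\|^2$. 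In the worst case this is the only control on the move.

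The core is a one-step bound on $\Phi_{k+1}-\Phi_k$, split by whether the model is $\kappa$-$\delta_k$-accurate, whether the estimates are $\varepsilon_f$-accurate, and whether the iteration is successful. On an unsuccessful iteration, $x_{k+1}=x_k$ and $\delta_k$ shrinks, so $\Phi_{k+1}-\Phi_k$ is a strictly negative multiple of $\delta_k^2$. On a successful iteration with accurate estimates, the sufficient-decrease lemma for accepted steps gives $f(x_{k+1})-f(x_k)\le-(\eta_1\eta_2-2\varepsilon_f)\delta_k^2$, hence, with the $\delta$-update, $\Phi_{k+1}-\Phi_k\le-c_1\delta_k^2$ with $c_1>0$ once $\nu$ is chosen appropriately (the bounds~\eqref{eq:thmetaacc}--\eqref{eq:thmvarf} enter here to keep these constants with the right sign and size). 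On a successful iteration with accurate model but inaccurate estimates, applying~\eqref{eq:accik} at $y=\hat x_k$ together with $[g_k]_{I_k}^{T}[\hat x_k-x_k]_{I_k}\le0$ (from the projection characterization of $\hat x_k$) yields $f(x_{k+1})-f(x_k)\le\kappa_f\delta_{max}\delta_k^2$, so $\Phi_{k+1}-\Phi_k\le c_2\delta_k^2$, with probability at most $1-\beta$. On a successful iteration with both the model and the estimates inaccurate, only $|f(x_{k+1})-f(x_k)|\le L\sqrt{1+4K}\,\delta_k$ is available, so $\Phi_{k+1}-\Phi_k\le c_3\delta_k+O(\delta_k^2)$, with probability at most $(1-\theta)(1-\beta)$. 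Taking $\mathbb{E}[\cdot\mid\mathcal{F}_{k-1}]$ and using Assumption~\ref{as:probacc}, the favourable events (estimates accurate, any model) occur with probability at least $\beta$ and contribute at most $-c_1\beta\delta_k^2$, while the two unfavourable families contribute at most $c_2(1-\beta)\delta_k^2$ and $c_3(1-\theta)(1-\beta)\delta_k$. For $\theta,\beta$ close enough to $1$, $-c_1\beta+c_2(1-\beta)\le-c_1/2$, and splitting the cross term by Young's inequality against $\delta_k^2$ (legitimate since $\delta_k\le\delta_{max}$) leaves $\mathbb{E}[\Phi_{k+1}-\Phi_k\mid\mathcal{F}_{k-1}]\le-\rho\,\delta_k^2$ with $\rho>0$.

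Summing over $k=0,\dots,N$ gives $\rho\sum_{k=0}^{N}\mathbb{E}[\delta_k^2]\le\Phi_0-\mathbb{E}[\Phi_{N+1}]\le\Phi_{\max}-\Phi_{\min}$ for every $N$, whence $\sum_k\mathbb{E}[\delta_k^2]<\infty$ and, by monotone convergence, $\sum_k\delta_k^2<\infty$ almost surely. I expect the main obstacle to be the fully inaccurate case: there the only control on the one-step change of $f$ is linear in $\delta_k$, so a naive estimate of $\mathbb{E}[\Phi_{k+1}-\Phi_k]$ carries a term of order $(1-\theta)(1-\beta)\delta_k$ instead of $\delta_k^2$; forcing the right-hand side to be a genuine multiple of $-\delta_k^2$ is precisely what makes the quantitative ``$\theta,\beta$ sufficiently large'' requirement unavoidable, and it leans essentially on the boundedness of the iterate set from Assumption~\ref{as:cont}, with the bookkeeping of $c_1,c_2,c_3$ and of $\nu$ against~\eqref{eq:thmetaacc}--\eqref{eq:thmvarf} being the delicate part.
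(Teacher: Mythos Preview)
Your overall structure is right, but there is a genuine gap precisely at the point you flag at the end: the fully-inaccurate case $A_k^c\cap B_k^c$. A term of order $(1-\theta)(1-\beta)c_3\delta_k$ cannot be absorbed into $-\rho\,\delta_k^2$ by Young's inequality together with $\delta_k\le\delta_{max}$. Young gives $(1-\theta)(1-\beta)c_3\delta_k\le \tfrac{\rho}{2}\delta_k^2 + \tfrac{c_3^2(1-\theta)^2(1-\beta)^2}{2\rho}$, and the second summand is a positive \emph{constant} independent of $k$; telescoping then yields $\tfrac{\rho}{2}\sum_{k\le N}\mathbb{E}[\delta_k^2]\le \Phi_0-\mathbb{E}[\Phi_{N+1}] + (N+1)\cdot\text{const}$, which says nothing about summability. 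More directly, the inequality $c\,\delta_k\le C\,\delta_k^2$ fails uniformly as $\delta_k\to 0$, so no choice of $\theta,\beta<1$ can make the conditional expectation a genuine multiple of $-\delta_k^2$ from your estimates alone.

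The paper (following~\cite{chen2018stochastic}) closes this gap with a second dichotomy, on the size of $\|[\nabla f(x_k)]_{I_k}\|$ relative to $\delta_k$: one fixes $\zeta>0$ and treats separately $\|[\nabla f(x_k)]_{I_k}\|\ge\zeta\delta_k$ and $\|[\nabla f(x_k)]_{I_k}\|<\zeta\delta_k$. In the first regime the Lipschitz bound on the bad event is written as $\nu C_L\|[\nabla f(x_k)]_{I_k}\|\delta_k$, and the crucial point is that the good event $A_k\cap B_k$ is shown, via Lemma~\ref{lem:gooddec2} (not the accepted-step $-C\delta_k^2$ lemma you invoke), to yield a decrease of the \emph{same} order $-\nu c\,\|[\nabla f(x_k)]_{I_k}\|\delta_k$; these matching scales are what lets condition~\eqref{eq:thbcond} on $\theta,\beta$ balance them. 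In the second regime, the bound $\|[\nabla f(x_k)]_{I_k}\|<\zeta\delta_k$ converts the Lipschitz term into $\nu C_L\zeta\delta_k^2$, now quadratic, and~\eqref{eq:thbetacond2} suffices. Your proposal is missing both this $\zeta$-split and the use of Lemma~\ref{lem:gooddec2} to obtain a good-event decrease scaling with $\|[\nabla f(x_k)]_{I_k}\|\delta_k$; without them the supermartingale inequality $\mathbb{E}[\Phi_{k+1}-\Phi_k\mid\mathcal{F}_k]\le-\rho\,\delta_k^2$ cannot be established.
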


\begin{proof}
We define the constants $\zeta$ together with $\nu$ appearing in~\eqref{eq:stochproc} as satisfying
\begin{equation}\label{eq:zetadef}
\zeta \ge \max\left\{ a^{-1} ,\kappa_g +\max\left\{\eta_2,\frac{2\epsilon_f+\kappa_f\delta_{max}}{1-\eta_1}\right\}\right\},
\end{equation}
where we recall that
\[
a = \frac 1{2\alpha \kappa_{\rev{ f}}\delta_{max}+2\sqrt{K}}
\]
and
\begin{equation}\label{eq:nudef}
\frac{\nu}{1-\nu} > \max\left\{\frac{4\gamma^2}{\zeta c},\frac{4\gamma^2}{\eta_1\eta_2},\frac{\gamma^2}{\kappa_f}\right\},
\end{equation}
with $c$ defined by Lemma~\ref{lem:gooddec2}.

We observe that on successful, or accepted, iterations,
\begin{equation}\label{eq:stochprocacc}
    \Phi_{k+1} - \Phi_k \le \nu (f(x_{k+1})-f(x_k))+(1-\nu)(\gamma^2-1)\delta_k^2
\end{equation}
and on unsuccessful iterations,
\begin{equation}\label{eq:stochprocnotacc}
    \Phi_{k+1} - \Phi_k \le(1-\nu)\left(\frac{1}{\gamma^2}-1\right)\delta_k^2 < 0.
\end{equation}

Let us define the event sequence \rev{\(A_k\)} as the satisfaction of model accuracy according to Definition~\ref{def:modelacc}, that is for all $y\in B(x_k,\delta_k)$, \rev{the random event occurs in $\mathcal{F}_{k}$ such that  the realization of of $G_k$ satisfies
\[
\|\nabla \rev{f}(y) - g_k\| \leq \kappa_{\rev{g}} \delta_k \quad \text{and} \quad |f(y) - f(x_k) - g_k^T(y - x_k)| \leq \kappa_{\rev{f}} \|y - x_k\| \delta_k^2.
\]}

\rev{Furthermore, the sequence of events \(B_k\)} is defined as the \rev{random event within $\mathcal{F}_{k+\frac{1}{2}}$ indicating that the function evaluation samples satisfy $\varepsilon_f$-accuracy according to Definition~\ref{def:facc}, that is,
\[
|f_k^0 - f(x_k)| \leq \varepsilon_f \delta_k^2 \quad \text{and} \quad |f_k^s - f(x_k + s_k)| \leq \varepsilon_f \delta_k^2.
\]}

\rev{Now fix a realization $\{\omega_k\}$ for the sequence $\{X_k,G_k,F^k_0,F^k_s\}$ in $\mathcal{F}_{\infty}$ and consider an iterate $x_k$ in this sequence.} We consider the different cases of an approximate stationarity condition denoted as:
\[
\|\rev{[}\nabla f(x_k)\rev{]}_{I_k}\| \le \epsilon,
\]

\paragraph{Case 1} $\|\rev{[}\nabla f(x_k)\rev{]}_{I_k}\| \geq \zeta \delta_k$.

We examine the following subcases based on different events:

\begin{enumerate}
    \item[(a)] \rev{$A_k\cap B_k$}: The model $g_k$ satisfies the $\kappa$-$\delta_k$ accuracy condition as well as having $\varepsilon_f$ accurate function evaluations. Applying~\eqref{eq:zetadef},
    \[
   \|\rev{[}\nabla f(x_k)\rev{]}_{I_k}\| \geq \delta_k /a. %\left(\kappa_g +\max\left\{\eta_2,\frac{8\kappa_f\alpha}{1-\eta_1}\right\}\right)
    \]
Rearranging, we obtain
%\[
%\delta_k \le \frac{\|(\nabla f(x_k))_{I_k}\|}{ \left(\kappa_g +\max\left\{\eta_2,\frac{8\kappa_f\alpha}{1-\eta_1}\right\}\right)}
% \le \frac{\max\left\{\delta_k,\alpha\|(\nabla f(x_k))_{I_k}\|\right\}}{ \alpha \left(\kappa_g +\max\left\{\eta_2,\frac{8\kappa_f\alpha}{1-\eta_1}\right\}\right)}
%\]

\[
\delta_k \le a\|\rev{[}\nabla f(x_k)\rev{]}_{I_k}\| 
 \le \frac{a\max\left\{\delta_k,\alpha\|\rev{[}\nabla f(x_k)\rev{]}_{I_k}\| \right\}}{ \alpha } .
\]
Notice that this implies~\eqref{cond_delta_b}, that is,
\[
\delta_k \le a\Biggl\|x_k-P_{I_k}\biggl(x_k-\alpha\min\left\{1,\frac{\delta_k}{\alpha\|\nabla f(x_k)\|}\right\}\nabla f(x_k)\biggr)\Biggr\|,
\]
and so we can apply Lemma~\ref{lem:gooddec2} to conclude that
        \[
        f(x_k) - f(\hat{x}_k) \geq \frac{1}{2\alpha} \|\hat{x}_k - x_k\|^2.
        \]

Moreover, due to model accuracy it holds that
\[
\|g_k\|\ge \|\nabla f(x_k)\|-\kappa_g \delta_k\ge (\zeta-\kappa_g)\delta_k\ge 
\min\Biggl\{\frac 1{\eta_2}, \frac{1-\eta_1}{2 \varepsilon_f+\kappa_{\rev{f}} \delta_{max}}\Biggr\}\delta_k.
\]
As such, we can apply Lemma~\ref{lem:stepaccept} to conclude that the step is accepted and Lemma~\ref{lem:gooddec2} to conclude that the stochastic process proceeds as
\begin{equation}
\begin{split}
    \Phi_{k+1}-\Phi_k & \le -\nu c \delta_k \Biggl\|x_k-P_{I_k}\biggl(x_k-\alpha\min\left\{1,\frac{\delta_k}{\alpha\|\nabla f(x_k)\|}\right\}\nabla f(x_k)\biggr)\Biggr\|+(1-\nu)(\gamma^2-1)\delta_k^2 \\ & \le \left[-\nu c\zeta+(1-\nu)(\gamma^2-1)\right]\delta_k^2<0,
    \end{split}
\end{equation}
where the second inequality uses the case assumption.
    
    \item[(b)] \rev{$A_k\cap B^c_k$}: The function values $f^0_k, f^s_k$ do not satisfy the $\varepsilon_f$-accuracy condition, while model accuracy still holds. In this case the same argument as part (a) holds, with the caveat that erroneous function estimates could lead to a step rejection. In that case, the change in the stochastic process is bounded by~\eqref{eq:stochprocnotacc}, that is,
    \[
    \Phi_{k+1}-\Phi_k = (1-\nu)\left(\frac{1}{\gamma^2}-1\right)\delta_k^2 < 0.
    \]

    \item[(c)] \rev{$A_k^c\cap B_k$}: If the step is unsuccessful then again we can apply~\eqref{eq:stochprocnotacc}. Otherwise, with accurate function estimates, we know from Lemma~\ref{lem:stepaccept} together with~\eqref{eq:thmvarf} that in this case
    \[
    \Phi_{k+1}-\Phi_k\le \left[-\nu \eta_1\eta_2+(1-\nu)(\gamma^2-1)\right]\delta_k^2,
    \]
    which is still bounded by~\eqref{eq:stochprocnotacc} on account of~\eqref{eq:nudef}.
    
    \item[(d)] \rev{$A_k^c\cap B_k^c$} In this case, standard Lipschitz arguments give the following bound on the increase in the value of $\Phi$:
    \[
    \Phi_{k+1}-\Phi_k \le \nu C_L\|\rev{\left[\nabla f(x_k)\right ]_{I_k}}\|\delta_k+(1-\nu)(\gamma^2-1)\delta_k^2, \,\, C_L:=\left(1+\frac{3L}{2\zeta}\right) .
    \]
\end{enumerate}

We can finally combine these results to obtain, using the definitions of the probabilities $\theta$ and $\beta$,
\begin{align*}
\mathbb{E}\left[\Phi_{k+1}-\Phi_k\vert \mathcal{F}_k\right] \le & \theta\beta[-\nu c\|\|\rev{[}\nabla f(x_k)\rev{]}_{I_k}\|]\delta_k+(1-\nu)(\gamma^2-1)\delta_k]\\
& + [\theta(1-\beta)+(1-\theta)\beta](1-\nu)\left(\frac{1}{\gamma^2}-1\right)\delta_k^2 \\
& + (1-\theta)(1-\beta)\left[C_L\|\rev{\left[\nabla f(x_k)\right]_{I_k}}\|\delta_k+(1-\nu)(\gamma^2-1)\delta_k^2\right].
\end{align*}
We can observe that we can proceed along the same lines as the proof of Case 1 in~\cite[Theorem 4.11]{chen2018stochastic} to conclude that with $\theta,\beta$ chosen to satisfy
\begin{equation}\label{eq:thbcond}
\frac{(\theta\beta-1/2)}{(1-\theta)(1-\beta)} \ge \frac{C_L}{c},
\end{equation}
we can apply~\eqref{eq:nudef} to obtain that \rev{both of the following two conditions hold}:
\begin{equation}\label{eq:case1res1}
        \mathbb{E}\left[\Phi_{k+1}-\Phi_k\vert \mathcal{F}_k,\{\|\rev{\left[\nabla f(x_k)\right]_{I_k}}\|\ge \zeta \delta_k\}\right] \le -\frac{1}{4}c \nu \|\nabla f(x_k)\|\delta_k
\end{equation}
and
\begin{equation}\label{eq:case1res2}
    \mathbb{E}\left[\Phi_{k+1}-\Phi_k\vert \mathcal{F}_k,\{\|\rev{\left[\nabla f(x_k)\right]_{I_k}}\|\ge \zeta \delta_k\}\right] \le -\frac{1}{2}(1-\nu)(\gamma^2-1) \delta^2_k.
\end{equation}

\paragraph{Case 2: } $\|\rev{[}\nabla f(x_k)\rev{]}_{I_k}\| <\zeta \delta_k$.

If $\|g_k\|<\eta\delta_k$ then~\eqref{eq:stochprocnotacc} holds. Now assume that $\|g_k\|\ge \eta_2 \delta_k$. We again examine the following subcases based on different events:

\begin{enumerate}
    \item[(a)] \rev{$A_k\cap B_k$}: The model $g_k$ satisfies the $\kappa$-$\delta_k$ accuracy condition as well as having $\varepsilon_f$ accurate function evaluations. In this case, since it cannot be ensured that the step is accepted, we can apply the argument of Case 1c to conclude that again~\eqref{eq:stochprocnotacc} holds.

       \item[(b)] \rev{$A_k\cap B^c_k$}: The function values $f^0_k, f^s_k$ do not satisfy the $\varepsilon_f$-accuracy condition, while model accuracy still holds. An unsuccessful iteration yields~\eqref{eq:stochprocnotacc} a successful iteration satisfies
       \[
       f(x_k)-f(x_{k+1}) = f(x_k)-h_k(x_k)+h_k(x_k)-h_k(\hat{x}_k)+h_k(\hat{x}_k)-f(\hat{x}_k) \ge (\eta_2/\alpha -2\kappa_f)\delta_k^2 \ge \kappa_f \delta_k^2
       \]
with~\eqref{eq:thmetaacc} responsible for the last inequality. Finally~\eqref{eq:nudef} implies~\eqref{eq:stochprocnotacc} holds again. 

       \item[(c)] \rev{$A^c_k\cap B_k$}: It is the same as Case 1c.

              \item[(d)] \rev{$A^c_k\cap B^c_k$}: It is the same as Case 1d. 

\end{enumerate}
Now, with $\theta,\beta$ chosen such that
\begin{equation}\label{eq:thbetacond2}    (1-\theta)(1-\beta)\le \frac{(\gamma^2-1)(1-\nu)}{(1-\nu)(\gamma^4-1)+2\gamma^2 C_L \zeta\nu},
\end{equation}
we follow similar arguments to obtain
\begin{equation}\label{eq:case1res2}
    \mathbb{E}\left[\Phi_{k+1}-\Phi_k\vert \mathcal{F}_k,\{\|\rev{\left[\nabla f(x_k)\right]_{I_k}}\|<\zeta \delta_k\} \right] \le -\frac{1}{2}(1-\nu)\left(1-\frac{1}{\gamma^2}\right) \delta^2_k.
\end{equation}

Finally, combining the two cases yields that
\[
\mathbb{E}\left[\Phi_{k+1}-\Phi_k\vert \mathcal{F}_k\right] \le - \sigma \delta_k^2
\]
with $\sigma>0$, and \rev{with the application of standard Martingale Convergence Theory} the theorem has been proven. 
\end{proof}

\rev{By taking a look at the proof of the previous theorem, it is easy to see that the probabilities involved in the assumptions, i.e., $\theta$ and $\beta$, 
should be suitably chosen. This fact is reported in the following remark.
\begin{remark}
Following the same reasoning as in \cite{chen2018stochastic} Theorem 4.11 and Corollary 4.12, we need to suitably choose $\theta$ and $\beta$ so that both equations \eqref{eq:thbcond} and \eqref{eq:thbetacond2} are satisfied. By setting those parameters sufficiently large, as suggested in \cite{chen2018stochastic}, we have that those conditions are satisfied. 
\end{remark}
}
We may proceed now to the main and final result. The rest of the original convergence argument can be applied directly to $\|\rev{[}\nabla f(x_k)\rev{]}_{I_k}\|$. However, recall that this is not the object that is of primary interest. We are indeed interested in proving that the proposed algorithm gives us a point satisfying some suitable optimality condition with high probability\rev{, specifically $L$-stationarity expressed in Definition~\ref{def:ls}}.

\begin{theorem} \rev{Let all the assumptions of Theorem \ref{thm:4.11_equivalent} hold, with $\theta$ and $\beta$ sufficiently large.  Then   \begin{equation}\label{eq:limconv}
    \lim\limits_{k\to\infty}\|\rev{[\nabla f(x_k)]_{I_k}}\| = 0
    \end{equation}
almost surely.    
}
    Moreover, for any limit point $x^*$ of a realization of iterates $\{x_k\}$,
    \rev{we have that there exists some $\bar L>0$ such that $x^*$ satisfies $\bar L$-stationarity, as given by  Definition~\ref{def:ls}.} 
\end{theorem}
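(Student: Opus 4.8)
The plan is to build on Theorem~\ref{thm:4.11_equivalent} in two stages: first upgrade the summability conclusion $\sum_k \delta_k^2 < \infty$ to the limit~\eqref{eq:limconv}, and then translate $\|[\nabla f(x_k)]_{I_k}\| \to 0$ at a limit point into the $L$-stationarity condition of Definition~\ref{def:ls}. For the first stage I would argue by contradiction: suppose along some realization there is a subsequence with $\|[\nabla f(x_k)]_{I_k}\| \ge \epsilon > 0$ infinitely often. On such iterations, Case~1 of the previous proof applies provided $\delta_k$ is small enough relative to $\epsilon$ (i.e.\ $\|[\nabla f(x_k)]_{I_k}\| \ge \zeta \delta_k$), and since $\delta_k \to 0$ almost surely (a consequence of $\sum \delta_k^2 < \infty$), this holds for all $k$ large in the subsequence. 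Then, mimicking Corollary~4.12 of~\cite{chen2018stochastic}, I would use the expected-decrease estimate~\eqref{eq:case1res1}, namely $\mathbb{E}[\Phi_{k+1}-\Phi_k \mid \mathcal{F}_k] \le -\tfrac14 c\nu \|[\nabla f(x_k)]_{I_k}\|\delta_k$, combined with the fact that $\{\Phi_k\}$ is bounded below (from Assumption~\ref{as:cont}, $f$ is bounded on $\mathcal{L}(x_0,\delta_{\max})$) and the telescoping/supermartingale structure, to conclude $\sum_k \|[\nabla f(x_k)]_{I_k}\|\delta_k < \infty$ almost surely. A separate argument — the standard one showing that successful iterations with a lower-bounded $\|[g_k]_{I_k}\|/\delta_k$ ratio cannot be too rare, so $\delta_k$ cannot decay too fast along the bad subsequence — then forces a contradiction with $\|[\nabla f(x_k)]_{I_k}\| \ge \epsilon$ infinitely often. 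Hence~\eqref{eq:limconv} holds a.s.

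For the second stage, fix a realization for which~\eqref{eq:limconv} holds and let $x^*$ be a limit point, say $x_{k_j} \to x^*$. The key is to control the index set $I_{k_j}$ in the limit. I would distinguish whether the cardinality constraint is active at $x^*$. If $\|x^*\|_0 = K$: since $x_{k_j} \to x^*$ and (by continuity of $\nabla f$ on $\bar{\mathcal L}(x_0,\delta_{\max})$) $\nabla f(x_{k_j}) \to \nabla f(x^*)$, and because $\hat x_{k_j}$ is obtained by keeping the top-$K$ components of $x_{k_j} - \alpha\min\{1,\delta_{k_j}/(\alpha\|g_{k_j}\|)\}g_{k_j}$, the nonzero support of $x^*$ must eventually be contained in $I_{k_j}$ (the $K$ coordinates where $|x^*|$ is bounded away from $0$ dominate once the perturbation $\delta_{k_j}\to 0$ is small and $g_{k_j}$ is close to $\nabla f(x^*)$). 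Thus $I_{\mathcal I}(x^*) \subseteq I_{k_j}$ for $j$ large, and since $|I_{k_j}| = K = |I_{\mathcal I}(x^*)|$ we get $I_{k_j} = I_{\mathcal I}(x^*)$ eventually. Then~\eqref{eq:limconv} reads $[\nabla f(x^*)]_{I_{\mathcal I}(x^*)} = 0$, which is exactly part~2 of Basic Feasibility; and $L$-stationarity via Lemma~\ref{lem:lstatb} requires in addition $|[\nabla f(x^*)]_i| \le \bar L M_K(x^*)$ for $i \in I_{\mathcal A}(x^*)$, which is automatic for any $\bar L \ge \|\nabla f(x^*)\|/M_K(x^*)$ since $M_K(x^*) > 0$ when the support is full. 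If instead $\|x^*\|_0 < K$, I would need $\nabla f(x^*) = 0$ entirely: here the gradient-step-plus-thresholding picks the $K$ largest coordinates of $x_{k_j} - (\text{small})g_{k_j}$, and the coordinates outside $I_{\mathcal I}(x^*)$ that enter $I_{k_j}$ are precisely those where $|[\nabla f(x_{k_j})]_i|$ is largest (since $[x^*]_i = 0$ there, the magnitude ranking on those coordinates is governed by the gradient). Since there are strictly fewer than $K$ nonzero components of $x^*$, at least one "gradient-selected" coordinate is always in $I_{k_j}$, and~\eqref{eq:limconv} forces the largest such $|[\nabla f(x^*)]_i|$ to zero — iterating, $\nabla f(x^*) = 0$, giving part~1 of BF and hence $\bar L$-stationarity for any $\bar L$.

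The main obstacle I anticipate is the limiting behavior of the combinatorial index set $I_k$: unlike the deterministic IHT analysis where thresholding acts on $\nabla f(x_k)$ directly, here $I_k$ is a sorting permutation of the \emph{clipped stochastic} vector $x_k - \alpha\min\{1,\delta_k/(\alpha\|g_k\|)\}g_k$, so I must carefully argue that the randomness ($g_k$ vs.\ $\nabla f(x_k)$) and the clipping do not spoil the identification of the correct support in the limit — this needs $\|g_k - \nabla f(x_k)\| \le \kappa_g\delta_k \to 0$ along the relevant (model-accurate) iterations, together with a tie-breaking/continuity argument at $x^*$ when some component magnitudes of $x^* - (\text{limit of the step})$ coincide, i.e.\ when the top-$K$ selection is itself non-unique at the limit. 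Handling that degenerate case — possibly by passing to a further subsequence on which $I_{k_j}$ is constant and showing every such constant choice yields a valid super-support — is where the argument requires the most care. A secondary subtlety is making the "$\delta_k$ cannot decay too fast" step of stage one rigorous in the stochastic setting, but this is essentially the Corollary~4.12 argument of~\cite{chen2018stochastic} adapted to $\|[\nabla f(x_k)]_{I_k}\|$ in place of $\|\nabla f(x_k)\|$, so I expect it to go through with only notational changes.
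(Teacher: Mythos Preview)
Your first stage matches the paper: it too invokes the Chen--Menickelly--Scheinberg machinery (their Theorems~4.16, 4.18 and Lemma~4.17) with $\|[\nabla f(x_k)]_{I_k}\|$ in place of $\|\nabla f(x_k)\|$, so nothing more is needed there.

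The second stage, however, has a genuine gap in the case $\|x^*\|_0<K$. You claim that on coordinates $i\notin I_{\mathcal I}(x^*)$ ``the magnitude ranking is governed by the gradient,'' but this is not so: the vector being sorted is $x_k-\alpha\min\{1,\delta_k/(\alpha\|g_k\|)\}g_k$, and the clipping forces the entire perturbation to have norm at most $\delta_k\to 0$ regardless of $\|g_k\|$. Hence the sorted vector converges to $x^*$ itself, and on the zero coordinates of $x^*$ the ranking in the limit is a tie---it is \emph{not} resolved by $\nabla f(x^*)$. Your argument that the largest gradient components are forced into $I_{k_j}$ and then killed by~\eqref{eq:limconv} therefore does not go through. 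A related issue is your reliance on $\|g_k-\nabla f(x_k)\|\le\kappa_g\delta_k$ along the subsequence: model accuracy holds only with probability $\theta$, so you cannot assume it on every $k_j$.

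The paper avoids both problems by taking precisely the route you flag in your ``main obstacle'' paragraph as a fallback: since $I_k$ ranges over a finite set, pass to a further subsequence on which $I_k\equiv I^*$ and $\sigma_k\equiv\sigma^*$; then use only $\delta_k\to 0$ (no model accuracy needed) to conclude the sorted vector tends to $x^*$, whence $\sigma^*\in\tilde\Sigma(x^*)$ and $I^*\supseteq I_{\mathcal I}(x^*)$. From~\eqref{eq:limconv} this gives $[\nabla f(x^*)]_{I_{\mathcal I}(x^*)}=0$. The paper then does \emph{not} attempt to show $\nabla f(x^*)=0$ when $\|x^*\|_0<K$; it splits instead on whether $\nabla f(x^*)=0$ (trivial $\bar L$-stationarity) or not, invoking Lemma~\ref{lem:lstatb} with $\bar L=\max_{i\in I_{\mathcal A}(x^*)}|[\nabla f(x^*)]_i|/M_K(x^*)$ in the latter case. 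So promote your fallback to the main argument and drop the gradient-selection heuristic.
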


\begin{proof}
\rev{The first part of the statement follows directly from the identical arguments as in~\cite{chen2018stochastic}. Specifically:}
\begin{enumerate}
    \item \rev{From Theorem~\ref{thm:4.11_equivalent}, we have that $\sum \delta_k^2<\infty$ holds almost surely, and thus $\delta_k\to 0$ almost surely. Then, one can obtain a contradiction to there being some $\epsilon'$ for which $\|[\nabla f(x_k)]_{I_k}\|\ge \epsilon'$, implying $\liminf_{k \to \infty} \|[\nabla f(x_k)]_{I_k}\| = 0$ as in~\cite[Theorem 4.16]{chen2018stochastic}.}
    \item \rev{As in~\cite[Lemma 4.17]{chen2018stochastic}, if $K_{\epsilon}$ is a subsequence of iterations such that $\|[\nabla f(x_k)]_{I_k}\|>\epsilon$ then \mbox{$\sum\limits_{k\in K_{\epsilon}} \delta_k < \infty$} by similarly invoking the condition that $\|[\nabla f(x_k)]_{I_k}\|\ge \zeta\delta_k$ and proving that, almost surely,
    \begin{equation}\label{eq:deltasumfin}
          \sum\limits_{k\in K_{\epsilon}} \Delta_k<\infty.  
        \end{equation}}
        \item \rev{Finally, using the previous results, applying the arguments of~\cite[Theorem 4.18]{chen2018stochastic} one can show that, if $\limsup\|[\nabla f(x_k)]_{I_k}\|> \epsilon$, then $\sum\limits_{K_{\epsilon}} \delta_k=\infty$, obtaining a contradiction with the previous result. This establishes~\eqref{eq:limconv}.}
    \end{enumerate}

\rev{Consider now an almost sure realization and let $x^*$ be a limit point of $\{x_k\}$, that is, there exists a subsequence $\{x_k\}_S\to x^*$, with $S \subseteq \{0,1,\ldots\}$.} Now fix the realization \rev{and corresponding subsequence} for the remainder of the proof.
\rev{Since $\sigma_k$ and $I_k$ are subsets of the finite set $\{1,\ldots,n\}$ for any $k$, without loss of generality we can assume that they are constant over the considered subsequence $S$ (passing into a further subsequence if needed). Namely, after discarding an appropriate finite sequence from the beginning of $S$, 
\begin{align}
I_k = I^* \quad \forall k \in S, \label{Ik_constant} \\
\sigma_k = \sigma^* \quad \forall k \in S. \label{sigmak_constant}
\end{align}}   
\rev{Observe that, for any $k$, we can write
\[
\alpha\min\left\{1,\frac{\delta_k}{\alpha\left\|g_k\right\|}\right\}\|g_k\|
\begin{cases}
= 0 \quad & \text{if $\|g_k\| = 0$}, \\
\le \dfrac{\alpha \delta_k\|g_k\|}{\alpha \|g_k\|} = \delta_k \quad & \text{if $\|g_k\| > 0$}.
\end{cases}
\]
Taking into account that $\{\delta_k\} \to 0$ from Theorem~\ref{thm:4.11_equivalent}, we get
\[
\lim_{k \to \infty} \alpha\min\left\{1,\frac{\delta_k}{\alpha\left\|g_k\right\|}\right\}\|g_k\| = 0.
\]
Thus,
\begin{equation}\label{lim_xk}
\lim_{k \to \infty, \, k \in S} x_k -\alpha\min\left\{1,\frac{\delta_k}{\alpha\left\|g_k\right\|}\right\}\|g_k\| = x^*.
\end{equation}
Since, according to~\eqref{sigmak_constant} and the definition of $\sigma_k$ given in~\eqref{eq:sigk}, we have
\[
\sigma^* \in \tilde{\Sigma}\left(x_k-\alpha\min\left\{1,\frac{\delta_k}{\alpha\left\|g_k\right\|}\right\}g_k\right) \quad \forall k \in S,
\]
it follows from~\eqref{lim_xk} that
\[
\sigma^* \in \tilde{\Sigma}(x^*).
\]
Hence, recalling~\eqref{Ik_constant} and the definition of $I_k$ given in~\eqref{eq:ikdef}, it holds that
\begin{equation}\label{inactive}
i \in  I_{\mathcal{I}}(x^*) \quad \Rightarrow \quad i \in \mathcal{I}^*,
\end{equation}
where we have used the fact that $x^*$ is feasible (and then, $x^*_i \ne 0$ implies that the $i$th component is one of the $K$ largest ones in $x^*$). Using~\eqref{inactive} and~\eqref{eq:limconv}, it follows that
\[
[\nabla f(x^*)]_i = 0 \quad \forall i \in  I_{\mathcal{I}}(x^*).
\]
We have thus proven that $x^*$ satisfies Basic Feasibility, according to Definition~\ref{def:bf}.
%Then, according to Lemma~\ref{lem:lstatb}, a sufficiently large constant $\bar L \ge L$ exists such that $\bar L$-stationarity holds at $x^*$.
}

\rev{We now consider two different cases: 
\begin{itemize}
\item $\nabla f(x^*)= 0$. In this case it is easy to see that the point is $\bar L$-stationary for any choice of $\bar L>0$.
\item $\nabla f(x^*)\neq 0$. Recalling Lemma~\ref{lem:lstatb} and reasoning as in~\cite[Remark~2.3]{beck2013sparsity}, we have that $\bar L$-stationarity holds with
\[
\bar L = \max_{i \in I_{\mathcal{A}}(x^*)} \frac{|[\nabla f(x^*)]_i|}{M_K(x^*)}.
\]
% Let
% \[
% \bar{\sigma}^*_k\in \tilde{\Sigma}\left(x^*-\alpha\min\left\{1,\frac{\delta_k}{\alpha\left\|\nabla f(x^*)\right\|}\right\}\nabla f(x^*)\right),
% \]
% and observe that $\min\left\{1,\frac{\delta_k}{\alpha\left\|\nabla f(x^*)\right\|}\right\}\to 0$ and so,
% \[
% \lim\sup\limits_{k\to\infty}\bar{\sigma}^*_k = \tilde{\Sigma}(x^*).
% \]
% Thus, for some $\bar k$, $k\ge \bar k$ implies that $\bar{\sigma}^*_k\in \tilde{\Sigma}(x^*)$. But this suggests that for such $\bar k$, $\bar L$-stationarity holds with
% \[
% \bar L^{-1} = \alpha\min\left\{1,\frac{\delta_{\bar k}}{\alpha\left\|\nabla f(x^*)\right\|}\right\}.
% \]
\end{itemize}
}
\end{proof}

\section{Numerical Results}\label{sec:numerical}
In this section, we present two machine learning applications of Algorithm~\ref{alg:iht}: adversarial attacks on neural networks and the reconstruction of sparse Gaussian graphical models. The implementation was carried out using the Python programming language, using the \texttt{NumPy}, \texttt{Keras}, \texttt{Tensorflow}, \texttt{scikit-learn}, and \texttt{Pandas} libraries. The hyperparameters were selected as follows: $\eta_1 = 10^{-4}$, $\eta_2 = 10^{-4}$, $\delta_0 = 1$, $\delta_{\text{max}} = 10$, and $\gamma = 2$. %This choice ensures the computational efficiency of the operator when dealing with high-dimensional data applications, such as those considered in the present work.
All the experiments were conducted on a machine equipped with an 11th Gen Intel(R) Core(TM) i7-1165G7 CPU @ 2.80GHz (1.69 GHz). The code is available at \url{https://github.com/Berga53/Probabilistic_iterative_hard_thresholding}.

% Both applications involve high-dimensional data, making the use of the Pseudo Hard Thresholding operator, as defined in \ref{sec:algorithm}, computationally expensive. For practical implementation, we instead utilize the classic Hard Thresholding operator \cite{beck2013sparsity}. However, tests on smaller instances have shown that the two operators perform similarly when a suitable value of $\alpha_s$ is chosen.

%%%%%%%%%%%%%%%%%%%%%%%%%%%%%%%%%%%%%%%%%%%%%%%%%%%%%%%%%%%%%%%%%%%%%%%%%%%%%%%
\subsection{Adversarial Attacks on Neural Networks}
Adversarial attacks are techniques used to craft imperceptible perturbations that, when added to regular data inputs, induce misclassifications in neural network models. These perturbations are typically designed to evade human detection while successfully fooling the model's classification process. 
%%% more about it? applications?
One of the most powerful type of adversarial attack is the Carlini and Wagner \cite{DBLP:journals/corr/CarliniW16a}, characterized by the following formulation:
\rev{
\begin{equation}\label{eq:C&W}
\begin{split}
\min_{\delta} D(x, x + \Delta) + c \cdot f(x + \Delta) \\
\text{ such that } x + \Delta \in [0, 1]^n\ ,
\end{split}
\end{equation}
with $\Delta$ being the perturbation, $D$ being usually the $\ell_2$ or $\ell_0$ distance, and 
\begin{equation}
\rev{f(x) = \left( \max_{i \neq t} ([F(x)]_i) - [F(x)]_t \right)^+,}
\label{eq:fcw}
\end{equation}
where $[F(x)]_i$ is the probability output for the class $i$, and $t$ is the targeted class.}

Using our algorithm, we can incorporate the $\ell_0$ penalty directly in the constraint, so our final formulation of the problem is

\begin{equation}\label{eq:C&Wfinal}
\begin{split}
\min_{\|\delta\|_0 \leq K} \|\Delta\|_2 + c \cdot f(x + \Delta) \\
\text{ such that } x + \Delta \in [0, 1]^n\ .
\end{split}
\end{equation}

In practice, this allows us to decide how many pixels to perturb during the attack. While usual attacks are trained against selected samples of the dataset, in this paper, we will demonstrate a universal adversarial attack: the attack is performed against the entirety of the dataset, producing only one global perturbation. We will show that, in both targeted and untargeted attacks, we can significantly lower a model's accuracy using very few pixels.
We tested the attack on the MNIST dataset, which consists of 60,000 images of handwritten digits (0-9) that are 28 $\times$ 28 pixels in size. We performed both targeted and untargeted attacks. In the targeted attack, we aimed to misclassify the images into a specific class, \rev{using a different digit as target class in different experiments. This approach allows us to manipulate the model to misclassify any digit as any chosen target digit.} In the untargeted attack, we simply aimed to cause any misclassification, \rev{choosing, for every sample, the easiest class to target}. However, the untargeted attack is generally a bit weaker in the context of the Carlini and Wagner Attack. We will show that, in both targeted and untargeted attacks, we can significantly lower a model's accuracy using very few pixels. We gradually increase the sparsity constraint and observe that this gradually increases the errors made by the model. In particular, in Figure \ref{fig:results_k}, we can see both the accuracy decreasing and the number of samples predicted as the attack target increasing, indicating that the attack is performed as desired. \rev{We present the mean and variance of the different experiments in blue, the best-targeted attack in orange, and the untargeted attack in green. As expected, the untargeted attack is weaker than the targeted one. In Figure~\ref{fig:advexample}, we observe examples of both the original and perturbed images where the attacks were successful, specifically targeting the digit 5.} 

\begin{figure}[h]
    \centering
    \begin{minipage}[b]{0.45\textwidth}
        \centering
        \includegraphics[width=\textwidth]{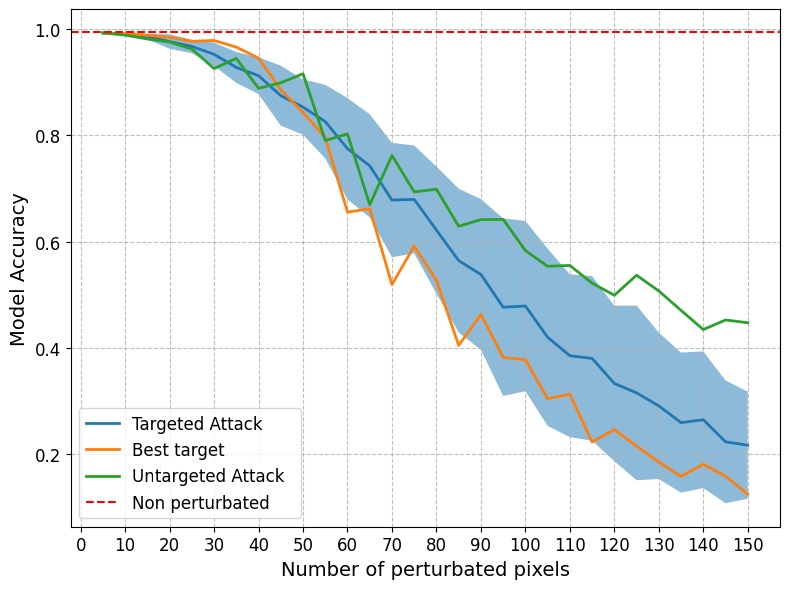}
        \label{fig:accuracy_k}
    \end{minipage}
    \begin{minipage}[b]{0.45\textwidth}
        \centering
        \includegraphics[width=\textwidth]{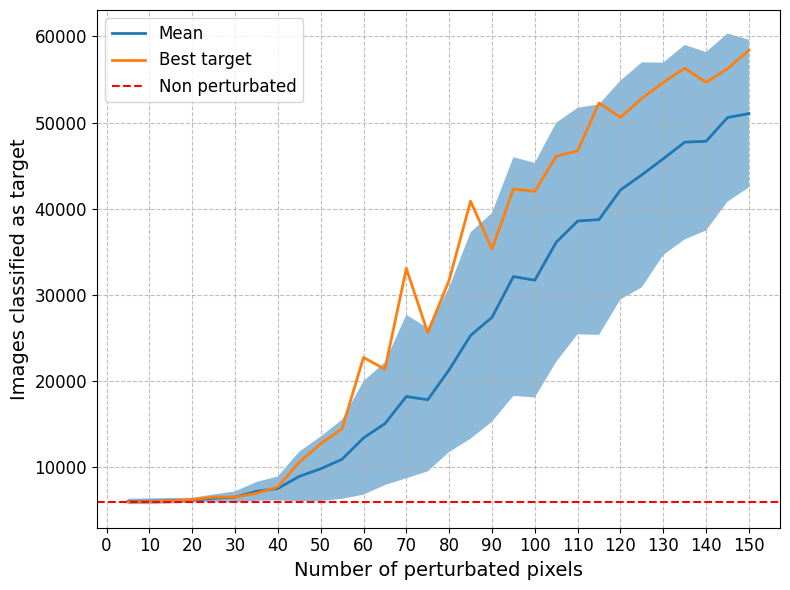}
        \label{fig:target_k}
    \end{minipage}
    \caption{Effect of increasing the sparsity constraint on accuracy and targeted attack predictions.}
    \label{fig:results_k}
\end{figure}

\begin{figure}[h]
    \centering
    \includegraphics[width=\linewidth]{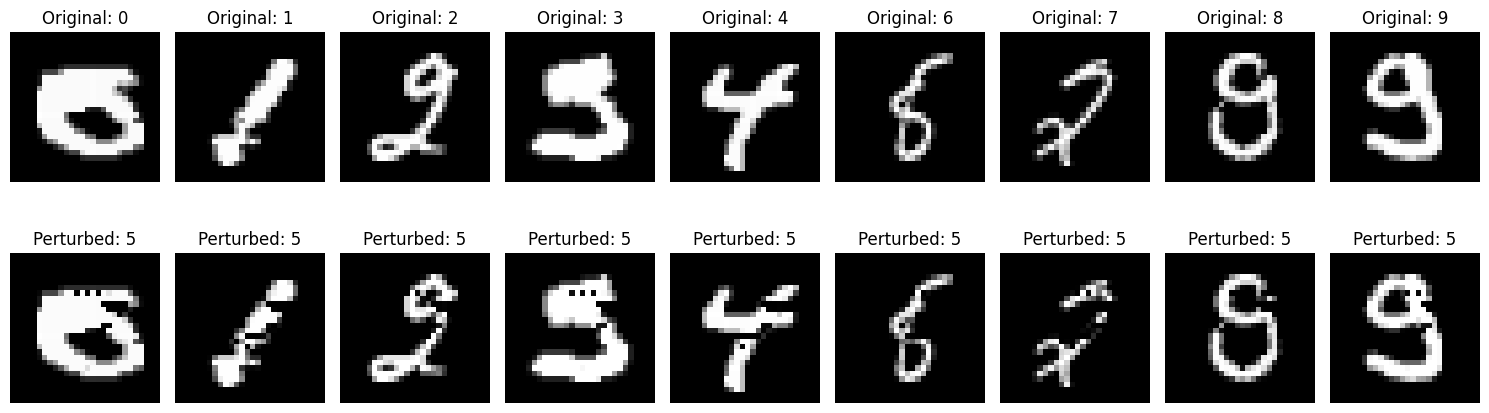}
    \caption{Example of perturbed images with $\| \delta\|_0 = 25$ and target 5}
    \label{fig:advexample}
\end{figure}

%%%%%%%%%%%%%%%%%%%%%%%%%%%%%%%%%%%%%%%%%%%%%%%%%%%%%%%%%%%%%%%%%%%%%%%%%%%%%%%
\subsection{Sparse Gaussian Graphical Models}
Probabilistic Graphical Models are a popular tool in machine learning to model the relationships between random variables. The Gaussian Graphical Model is an undirected graph with each edge corresponding to a Gaussian conditional probability of one variable at the end of the edge to another. By learning the adjacency matrix together with the model weights, we can infer the proximal physical and possibly causal relationships between quantities. 

This is of special importance in high dimensional settings (see, e.g.,~\cite{wainwright2019high}). Whereas in many contemporary ``big data'' approaches the sample size is many orders of magnitudes larger than the dimensionality of feature space, there are a number of settings wherein obtaining data samples is costly, and such a regime cannot be expected to hold. Indeed this is often the case in medical applications, wherein recruiting volunteers for a clinical trial, or even obtaining health records, presents formidable costs to significant scaling in sample size. On the other hand, the precision of instrumentation has led to detailed personal physiological and biomarker data, yielding a very high dimensional feature space. One associated observation is that in the underdetermined case, when the dimensionality of the features exceeds the number of samples, some of the guarantees associated with the $\ell_1$ proxy for sparsity are no longer applicable, bringing greater practical salience to having a reliable algorithm enforcing sparsity explicitly.

The recent work \cite{behdin2023sparse} presented an integer programming formulation for training sparse Gaussian graphical models. Prior to redefining the sparsity regularization using binary variables, their $\ell_0$ optimization problem is given as
\rev{
\begin{equation}\label{eq:sparsegausspgm}
    \min\limits_{W\in\mathbb{S}^p} \, F_0(W) := \sum\limits_{i=1}^p \left(-\log(w_{ii})+\frac{1}{w_{ii}}\|\tilde{X} w_i\|^2\right)+\lambda_0 \|W\|_0+\lambda_2\|W\|_2^2\ ,
\end{equation}
with $\tilde{X}=\frac{1}{\sqrt{n}}X$ the scaled feature matrix and $X\in\mathbb{R}^{p\times n}$ a matrix consisting of $p$ measures and $n$ samples, $W\in\mathbb{S}^p$ the weight matrix related to the graph, $\mathbb{S}^p$ being the set of symmetric matrices in $\mathbb{R}^{p\times p}$, $w_{ii}$ indicating the $i$-th component in the diagonal of $W$ and $w_i$ indicating the $i$-th row of $W$.} Functionally, $w_{ij}$ defines an edge between node $i$ and $j$ in the graph, with a nonzero indicating the presence of an active edge, which corresponds to a direct link in the perspective of DAG structure of the group. The value associated with the edge corresponds to the weight defining the strength of the interaction between the features $i$ and $j$. We seek to regularize cardinality for the sake of encouraging parsimonious models, as well as minimizing the total norm of the weights for general regularization.

Due to the structure of our algorithm, we can modify the formulation of the problem by incorporating the $\ell_0$ constraint. The final formulation of the problem is then expressed as follows:

\begin{equation}\label{eq:sparsegausspgmfinal}
    \min\limits_{W\in\mathbb{S}^p, \|W\|_0 \leq K} \, F_0(W) := \sum\limits_{i=1}^p \left(-\log(w_{ii})+\frac{1}{w_{ii}}\|\tilde{X} w_i\|^2\right) +\lambda_2\|W\|_2^2\ .
\end{equation}

We also observed that the $\ell_0$ constraint in our formulation is very strong. In practical applications, we eliminate $\lambda_2$ penalty term, as the $\ell_0$ constraint was the dominant factor in the model.

We applied the model to the GDS2910 dataset from the Gene Expression Omnibus (GEO). This dataset consists of gene expression profiles, which naturally yield a high-dimensional feature space, with 1900 features and 191 samples. Given this feature-to-sample ratio, we can assume some level of sparsity in the final adjacency matrix. Since there is no ground truth for the underlying structure, our goal is to investigate how changing the $\ell_0$ constraint affects the results of our method, while also gathering information on the true sparsity nature of the data. We performed the test by gradually increasing $K$, the $\ell_0$ constraint, from 5000 to 15000. This range was previously determined to be optimal based on preliminary tests. Note that the adjacency matrix we are searching for is of size $1900 \times 1900$, resulting in a total of $3.61 \cdot 10^6$ entries. To ensure the robustness of the results, for each value of $K$, we performed ten runs starting from different randomly chosen feasible points, and the algorithm was given a total of 1000 iteration for every run. We also decided to set the $\lambda_2$ parameter to zero, as we observed that the strong $\ell_0$ constraint was dominant over the $\ell_2$ penalty.

We also divided the dataset into training and validation sets to determine whether the reconstructed matrix is a result of overfitting. In Figure \ref{fig:effect_of_K}, we show the effect of varying $K$, which represents the number of nonzero entries that the matrix is allowed to have. The figure on the left, which shows the average objective value found over the ten runs, demonstrates that increasing $K$ eventually stops being beneficial to the model's performance. Additionally, we observe that the number of mean accepted iterations also stops increasing, indicating that the model cannot extract more information from the data. This suggests that the true sparsity of the data can be estimated by identifying the point at which further increasing $K$ no longer improves the model's results. In Figure \ref{fig:fovert}, we present an example from our tests where the objective function decreases over the successful iterations.

\begin{figure}[h]
    \centering
    \begin{minipage}[b]{0.45\textwidth}
        \centering
        \includegraphics[width=\textwidth]{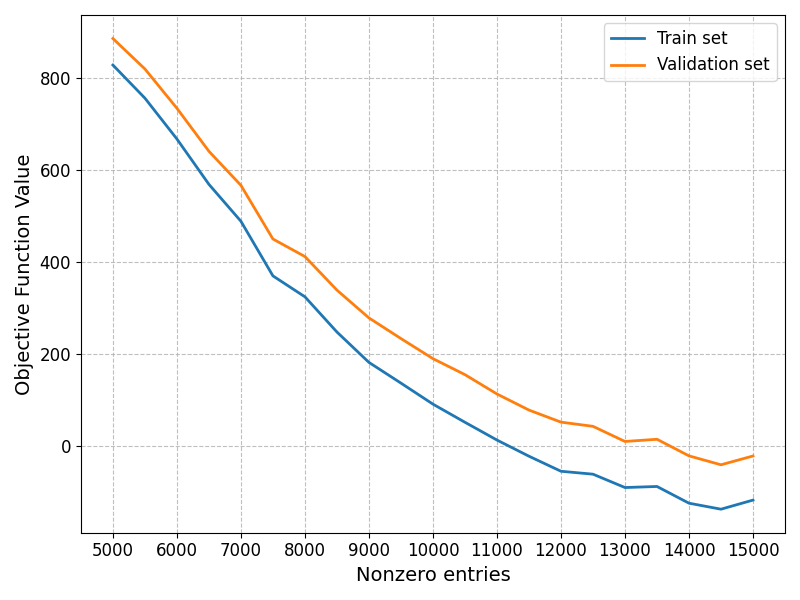}
        \label{fig:accuracy_k}
    \end{minipage}
    \begin{minipage}[b]{0.45\textwidth}
        \centering
        \includegraphics[width=\textwidth]{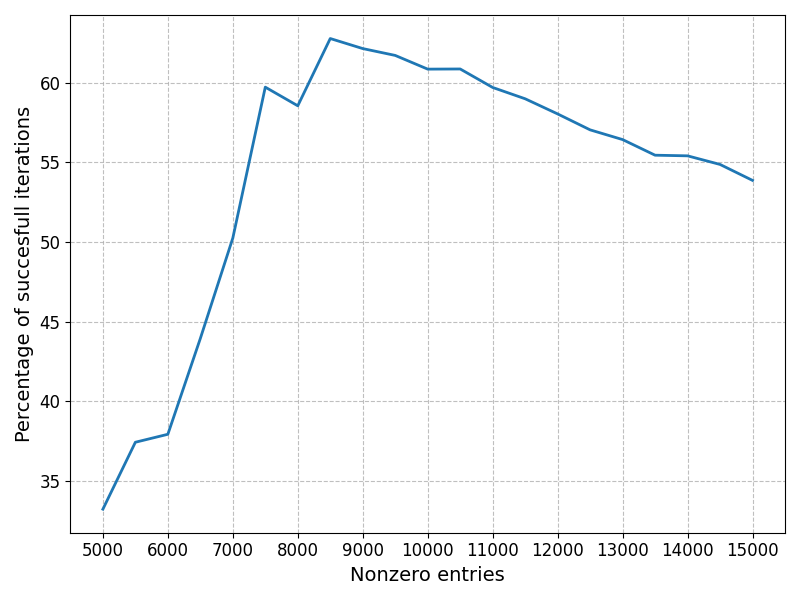}
        \label{fig:target_k}
    \end{minipage}
    \caption{Effect of increasing the sparsity constraint $K$.}
    \label{fig:effect_of_K}
\end{figure}

\begin{figure}[h]
    \centering
    \includegraphics[width=0.5\linewidth]{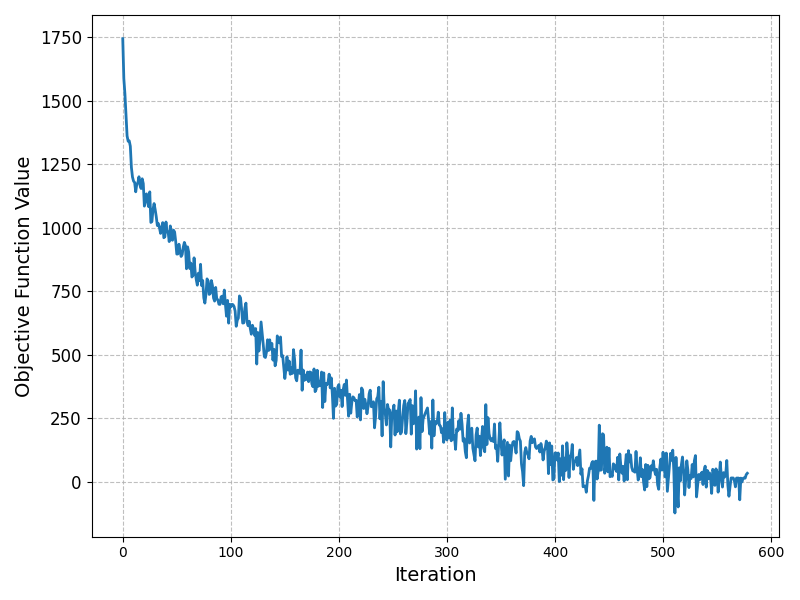}
    \caption{Objective function over the iterations.}
    \label{fig:fovert}
\end{figure}

\section{Conclusions}\label{sec:conclusions}

In this paper, we addressed the stochastic cardinality-constrained optimization problem, providing a well defined algorithm, convergence theory and illustrative experiments. Many contemporary machine learning applications involve scenarios where sparsity is crucial for high-dimensional model fitting. We proposed an iterative hard-thresholding like algorithm based on probabilistic models that nicely  balances computational efficiency and solution precision by allowing flexible gradient estimates while incorporating hard sparsity constraints.

We analyzed the theoretical properties of the method and proved almost sure convergence to L-stationary points under mild assumptions. This extends previous work in the optimization literature on finding solutions with strong stationarity guarantees together with machine learning articles that perform iterative hard thresholding with stochastic gradients to achieve a novel balance between ease of a fast implementation and formal guarantees of performance. The numerical experiments confirmed the practical effectiveness of our method, showcasing its potential in machine learning tasks such as adversarial attacks and probabilistic graphical model training. By enforcing explicit cardinality constraints, our approach was able to produce models with enhanced sparsity and interpretability in the end.

Future work may involve extending the algorithm to accommodate additional nonlinear  constraints,  exploring techniques to further improve scalability and performance, as well as
testing the algorithm on some other relevant Machine Learning applications, like, e.g., sparse  Dynamic Bayesian Network training.

\par\bigskip\noindent
\textbf{Funding} The work of Vyacheslav Kungurtsev was funded by the European Union’s Horizon Europe research and innovation programme under grant agreement No. 101084642.

\bibliography{sample}

\end{document}